\newtheorem{theorem}{Theorem}[section]
\theoremstyle{plain}
\newtheorem{definition}{Definition}[section]
\newtheorem{lemma}{Lemma}[section]
\newtheorem{corollary}{Corollary}[section]
\newtheorem{remark}{Remark}[section]
\numberwithin{equation}{section}
\newcommand\makebig[2]{%
 \@xp\newcommand\@xp*\csname#1\endcsname{\bBigg@{#2}}%
 \@xp\newcommand\@xp*\csname#1l\endcsname{\@xp\mathopen\csname#1\endcsname}%
 \@xp\newcommand\@xp*\csname#1r\endcsname{\@xp\mathclose\csname#1\endcsname}%
}
\newcommand{\doublehat}[1]{%
\begingroup%
  \let\macc@kerna\z@%
  \let\macc@kernb\z@%
  \let\macc@nucleus\@empty%
  \hat{\mathchoice%
    {\raisebox{.2ex}{\vphantom{\ensuremath{\displaystyle #1}}}}%
    {\raisebox{.2ex}{\vphantom{\ensuremath{\textstyle #1}}}}%
    {\raisebox{.16ex}{\vphantom{\ensuremath{\scriptstyle #1}}}}%
    {\raisebox{.14ex}{\vphantom{\ensuremath{\scriptscriptstyle #1}}}}%
    \smash{\hat{#1}}}%
\endgroup%
}
\begin{document}
\title{The Riesz basisness of the eigenfunctions and eigenvectors connected to the stability problem of a fluid-conveying tube with boundary control}
\author{Mahyar Mahinzaeim$^{\tt1,\ast}$}
\author{Gen Qi Xu$^{\tt2}$}
\author{Xiao Xuan Feng$^{\tt1,\tt3}$}

 \thanks{
\vspace{-1em}\newline\noindent
{\sc MSC2020}: 37L15, 93D23, 37C10, 47A56, 35C10
\newline\noindent
{\sc Keywords}: {tube conveying fluid, operator pencil, spectral analysis, Riesz basisness, series expansion, exponential stability}
\newline\noindent
$^{\tt1}$ Research Center for Complex Systems, Aalen University, Germany.
 \newline\noindent
 $^{\tt2}$ Department of Mathematics, Tianjin University, China.
 \newline\noindent
 $^{\tt3}$ School of Mathematics and Information Science, Hebei Normal University of Science and Technology, China.
 \newline\noindent
 {\sc Emails}:
 {\tt m.mahinzaeim@web.de},~{\tt gqxu@tju.edu.cn},~{\tt xiaoxuan.feng@hs-aalen.de}.
 \newline\noindent
$^{\ast}$ Corresponding author.
}

\begin{abstract}
In the present paper we study the stability problem for a stretched tube conveying fluid with boundary control. The abstract spectral problem concerns operator pencils of the forms
\begin{equation*}
\mathcal{M}\left(\lambda\right)=\lambda^2G+\lambda D+C\quad\text{and}\quad\mathcal{P}\left(\lambda\right)=\lambda I-T
\end{equation*}
taking values in different Hilbert product spaces. Thorough analysis is made of the existence, location, multiplicities, and asymptotics of eigenvalues in the complex plane and Riesz basisness of the corresponding eigenfunctions and eigenvectors. Well-posedness of the closed-loop system represented by the initial-value problem for the abstract equation
\begin{equation*}
\dot{{x}}\left(t\right)=Tx\left(t\right)
\end{equation*}
is established in the framework of $C_0$-semigroups as well as expansions of the solutions in terms of eigenvectors and stability of the closed-loop system. For the parameters of the problem we give new regions, larger than those in the literature, in which a stretched tube with flow, simply supported at one end, with a boundary controller applied at the other end, can be exponentially stabilised.
\end{abstract}
\maketitle

\pagestyle{myheadings} \thispagestyle{plain} \markboth{\sc M.\ Mahinzaeim, G.\ Q.\ Xu, X.\ X.\ Feng}{\sc Exponential stability of boundary-controlled tube conveying fluid}

\section{Introduction and statement of the problem}\label{sec_intro}

The partial differential equation governing the motion of a long and thin homogeneous tube of unit length, carrying the stationary flow of an incompressible fluid can be written in terms of the transverse deflection $\bm{w}\left(s,t\right)$ for $s\in\left[0,1\right]$, $t\in\mathbf{R}_+$ as
\begin{equation}\label{eq_01}
\frac{\partial^4\bm{w}\left(s,t\right)}{\partial s^4}+\eta^2\frac{\partial^2\bm{w}\left(s,t\right)}{\partial s^2}+2\beta \eta\frac{\partial^2\bm{w}\left(s,t\right)}{\partial s\partial t}+\frac{\partial^2\bm{w}\left(s,t\right)}{\partial t^2}=0.
\end{equation}
Here $\eta\geq 0$ represents the velocity of the fluid flow and $\beta\in\left(0,1\right)$ is a parameter depending only on the tube and fluid mass densities. This is the basic type of tube to model mathematically and for which a vast literature exists. It can be used to model flow-induced vibrations e.g.\ in the case of fully developed turbulent flow -- i.e.\ whenever the flow resembles plug flow. Classic examples include vibrations in nuclear reactor fuel pins, offshore risers and tendons, wind-induced vibrations in electric power lines, and supersonic panel flutter. There are other examples, see the recent survey \cite{PAIDOUSSIS2022103664}. A full derivation of \eqref{eq_01} and an explanation of the physics involved may be found e.g.\ in \cite[Chapter 3]{Paidoussis2014} or \cite[Section 8.5]{CrandallEtAl1968}. (The reader should be aware that apart from the main model approximation that the tube is ``long and thin'', so a spatially one-dimensional model is valid, the model \eqref{eq_01} relies only on the further assumption of mean fluid-flow velocity. Thus it should be a reasonable representation even for laminar flow. It is also noteworthy that the term ``vibration'' is used herein to include both flutter and buckling behaviour.)

Several variants of \eqref{eq_01} can be obtained (the following list is by no means complete). For example, when there is no flow in the tube, $\eta=0$, the equation is simplest and reduces to the model for the classic Euler--Bernoulli beam. On the other hand, if the tube is subjected to an external axial force which is negatively proportional to the bending moment, then the equation analogous to \eqref{eq_01} includes an extra term
\begin{equation*}\label{eq_01xcv}
-\gamma\frac{\partial^2\bm{w}\left(s,t\right)}{\partial s^2},
\end{equation*}
where $\gamma$ is positive or negative, depending on whether the force is tensile or compressive, respectively. If the tube is made of a material modelled by the Kelvin--Voigt model for linear viscoelasticity and is situated in a linearly viscous surrounding medium, then the equation has an extra term
\begin{equation*}
\alpha\frac{\partial^5 \bm{w}\left(s,t\right)}{\partial s^4\partial t}+\delta\frac{\partial \bm{w}\left(s,t\right) }{\partial t},
\end{equation*}
where $\alpha>  0$ is the viscoelastic damping coefficient and the parameter $\delta\geq 0$ corresponds to the viscous damping due to friction from the surrounding medium. Referring the reader to our paper \cite{MahinzaeimEtAl2021b}, we will not consider this situation further here and will, in fact, focus our attention on the tension modification, through addition of the term indicated above for $\gamma> 0$ to \eqref{eq_01}, i.e.\
\begin{equation}\label{eq_01xabvf}
\frac{\partial^4\bm{w}\left(s,t\right)}{\partial s^4}-(\gamma-\eta^2)\,\frac{\partial^2\bm{w}\left(s,t\right)}{\partial s^2}+2\beta \eta\frac{\partial^2\bm{w}\left(s,t\right)}{\partial s\partial t}+\frac{\partial^2\bm{w}\left(s,t\right)}{\partial t^2}=0,
\end{equation}
and suppose damping to be present only at the boundary of the vibrating fluid-conveying tube (and not from insertion of damping terms into its equation of motion); we will discuss this latter matter shortly.

For this tube system \eqref{eq_01xabvf} let us pose the following initial/boundary-value problem. We associate with \eqref{eq_01xabvf} given initial conditions
\begin{equation}\label{eq_04}
\bm{w}\left(s,0\right)=g\left(s\right),\quad \left.\frac{\partial \bm{w}\left(s,t\right)}{\partial t}\right|_{t=0}=h\left(s\right),
\end{equation}
where the functions $g$, $h$ are assumed suitably smooth in a sense to be made more precise later (in Section \ref{sec22}). For boundary conditions we assume that the end of the tube at $s=0$ is simply supported,
\begin{equation}\label{eq_02b}
\bm{w}\left(0,t\right)=\left.\frac{\partial^2 \bm{w}\left(s,t\right)}{\partial s^2}\right|_{s=0}=0,
\end{equation}
and at the end $s=1$ we assume that
\begin{equation}\label{eq_06}
\left.\frac{\partial^2 \bm{w}\left(s,t\right)}{\partial s^2}\right|_{s=1}=\bm{u}\left(t\right),\quad \left.\frac{\partial^3 \bm{w}\left(s,t\right)}{\partial s ^3}\right|_{s =1}= \left.(\gamma-\eta^2)\, \frac{\partial \bm{w}\left(s,t\right)}{\partial s }\right|_{s =1}.
\end{equation}
In the former condition in \eqref{eq_06} we take account of a boundary controller $\bm{u}\left(t\right)$ corresponding to the bending moment of the tube at $s=1$. The latter condition in \eqref{eq_06} assumes that the tube undergoes tension which acts always in a fixed direction along the axis of the tube and does not rotate with its end. This is a particularly interesting case because when there is no flow in the tube, the uncontrolled system is the same as in \textit{Beck's Problem} (the mixed derivative, or gyroscopic, term vanishes in \eqref{eq_01xabvf}, upon taking $\eta=0$, see \cite{Beck1952,Bolotin1963,Ziegler1977}) but under the action of a ``nonfollowing'' axial force, corresponding to an energy-conservative system. So we are led to suspect here that by demanding that the controls $\bm{u}$ be of the negative velocity feedback variety, i.e.\ boundary damping, it should be possible, in the case $\eta>0$ such that $\gamma> \eta^2$, to control and thereby to stabilise the vibration of the tube, meaning that, for any initial conditions, the total vibrational energy dissipates in time. 

Let us see what we can develop along these lines. We begin by assuming for the moment that there exists a (classical) solution $\bm{w}\left(s,t\right)$ of \eqref{eq_01xabvf}--\eqref{eq_06}. If at a given time $t$ the total energy of the tube system is given by
\begin{equation}\label{eqaas345}
\mathbf{E}\left(t\right)=\frac{1}{2}\int_0^1\left[\left(\frac{\partial^2 \bm{w}\left(s,t\right)}{\partial s^2}\right)^2+(\gamma-\eta^2)\left(\frac{\partial \bm{w}\left(s,t\right)}{\partial s}\right)^2+\left(\frac{\partial \bm{w}\left(s,t\right)}{\partial t}\right)^2\right]ds,
\end{equation}
then by differentiating \eqref{eqaas345} with respect to $t$, followed by some integration by parts and use of \eqref{eq_01xabvf} and \eqref{eq_02b}, \eqref{eq_06}, we find that the time rate of change of the energy is
\begin{equation*}
\frac{d}{dt}\mathbf{E}\left(t\right)=-2\beta\eta\int_0^1\frac{\partial^2 \bm{w}\left(s,t\right)}{\partial s\partial t}\,\frac{\partial \bm{w}\left(s,t\right)}{\partial t}\,ds+\left.\bm{u}\left(t\right)\frac{\partial^2 \bm{w}\left(s,t\right)}{\partial s\partial t}\right|_{s =1}.
\end{equation*}
Using that
\begin{equation*}
-2\beta\eta\int_0^1\frac{\partial^2 \bm{w}\left(s,t\right)}{\partial s\partial t}\,\frac{\partial \bm{w}\left(s,t\right)}{\partial t}\,ds=-\beta\eta\int_0^1\frac{\partial }{\partial s}\left(\frac{\partial \bm{w}\left(s,t\right)}{\partial t}\right)^2ds=-\beta\eta\left.\left(\frac{\partial \bm{w}\left(s,t\right)}{\partial t}\right)^2\right|_{s =1},
\end{equation*}
we obtain
\begin{equation}\label{eq12new34}
\frac{d}{dt}\mathbf{E}\left(t\right)=-\beta\eta\left.\left(\frac{\partial \bm{w}\left(s,t\right)}{\partial t}\right)^2\right|_{s =1}+\left.\bm{u}\left(t\right)\frac{\partial^2 \bm{w}\left(s,t\right)}{\partial s\partial t}\right|_{s =1}.
\end{equation}
This prompts the selection of $\bm{u}\left(t\right)$ as 
\begin{equation}\label{eq_06ssyy}
\bm{u}\left(t\right)=\left.-\kappa\frac{\partial^2 \bm{w}\left(s,t\right)}{\partial s \partial t}\right|_{s=1},
\end{equation}
a feedback controller which is determined from knowledge of the angular velocity of the tube at $s=1$, for some feedback parameter $\kappa\geq 0$. Insertion of \eqref{eq_06ssyy} into \eqref{eq12new34} yields (the inequality a consequence of  $\beta\in\left(0,1\right)$ and $\eta,\kappa\geq0$)
\begin{equation}\label{eq_06ssyxy}
\frac{d}{dt}\mathbf{E}\left(t\right)=-\beta\eta\left.\left(\frac{\partial \bm{w}\left(s,t\right)}{\partial t}\right)^2\right|_{s =1}-\kappa\left.\left(\frac{\partial^2 \bm{w}\left(s,t\right)}{\partial s\partial t}\right)^2\right|_{s =1}\leq 0.
\end{equation}
Consequently, if $\bm{w}\left(s,t\right)$ is a solution of the initial/boundary-value problem for \eqref{eq_01xabvf} with $\bm{u}\left(t\right)$ given by \eqref{eq_06ssyy}, the energy is nonincreasing in time and we have a dissipative system (obviously this also holds for $\bm{u}\equiv 0$ and is not really a separate case from ours); the energy is conserved in the case $\eta=\kappa=0$. The dissipativity inequality \eqref{eq_06ssyxy} alone, of course, gives us no idea of the asymptotic behaviour of the system in time, e.g., as to how fast the energy of solutions of \eqref{eq_01xabvf}--\eqref{eq_06} decays as time goes to infinity. In fact, it tells us nothing about whether energy decay takes place at all, i.e., whether $\mathbf{E}\left(t\right)\rightarrow 0$ as $t\rightarrow\infty$. In this paper we will establish that under certain conditions on the parameter triple $\left\{\gamma,\eta,\kappa\right\}$ and for arbitrary $\beta\in\left(0,1\right)$ this is in fact true and, moreover, that the energy decay actually is \textit{exponential}. This is the main business of the paper which is worked out by studying the boundary-eigenvalue problem corresponding to \eqref{eq_01xabvf}--\eqref{eq_06} to obtain parameter regions (of some independent interest in engineering circles!) larger than those already obtained in the literature in order to produce an exponential decay rate. To explain what is meant by this and provide theoretical justification for this ``spectral approach'', a certain amount of background is required. 

\section{Some background and preliminary results}\label{sec22}

To avoid any confusion, following standard terminology, we will use from now on the acronym $\mathbf{CLS}$ (closed-loop system) to denote the initial/boundary-value problem posed by \eqref{eq_01xabvf}--\eqref{eq_06} with $\bm{u}\left(t\right)$ as given by the boundary feedback relation \eqref{eq_06ssyy}. As mentioned at the end of the Introduction, we will be concerned with the exponential stabilisability or stability of (solutions of) the $\mathbf{CLS}$ (we define in Section \ref{sec2_3} what we mean, in more detail, by stability and exponential stability). For this it is instructive first to consider, in the context of the existing literature, a number of uncontrolled variants of the above system which can be exemplified by the initial/boundary-value problem for \eqref{eq_01} subject to cantilevered, fully clamped or simply supported boundary conditions; by ``cantilevered'' we mean that the tube is clamped at one end and left free at the other, corresponding to the boundary conditions $\bm{w}\left(0,t\right)=\left.\left(\partial \bm{w}/\partial s\right)\left(s,t\right)\right|_{s =0}=0$ and $\left.(\partial^2 \bm{w}/\partial s^2)\left(s,t\right)\right|_{s =1}=\left.(\partial^3 \bm{w}/\partial s^3)\left(s,t\right)\right|_{s =1}=0$. It is convenient to refer to this initial/boundary-value problem with any of the aforementioned boundary conditions as the open-loop system, abbreviated hereafter to \textbf{OLS}.

There is a long history in engineering and mathematics as regards the stability properties of the \textbf{OLS}, which, in general, is investigated either through use of so-called multiplier methods \cite{Komornik1994,MR3220858}, a generalisation of Liapunov's direct method to partial differential equations \cite{MR113375,MR113375x}, or through a spectral approach by the use of separation of variables in the \textbf{OLS} followed by a careful analysis of the eigenvalues of the resulting boundary-eigenvalue problem under variations in the pair $\left\{\beta,\eta\right\}$. The first investigations have been due, independently, to Feodosiev \cite{Feodosiev1951} and Housner \cite{Housner1952}. Subsequently several other investigations, along with various model extensions, have been presented in many papers, \cite{Heinrich1956,Benjamin1961b,PlautHuseyin1975,paidoussis_1966b,paidoussis_1966,PaidoussisIssid1974,Dotsenko1979,Movchan1965,Handelman1955,BajajEtAl1980,MR122199,Holmes1978,Holmes1977,MR1211620,GregoryPaidoussis1966a,GregoryPaidoussis1966b,SteinTobriner1970,MR1727216,TYLIKOWSKI1979141,Matviichuk1990,MR1449838,MR1792245,MR1274332} to name just a few, and their results -- analytical, numerical, and experimental -- are very well known by now. For example, the classic result of Movchan \cite{Movchan1965} is that while solutions of the \textbf{OLS} are stable in the interval $0\leq \eta<\pi$, they are unstable for $\eta=n\pi$, $n\in\mathbf{N}$. We refer the interested reader to the monograph of Pa\"idoussis \cite{Paidoussis2014} for further extensive information and historical references on the stability of the \textbf{OLS} and its many variants, along with numerous experimental results. A comprehensive survey of the earlier work is also available in the book by Thompson \cite{Thompson1982}.

In the mathematical literature, the first operator-theoretic or abstract account of the \textbf{OLS} goes back to Holmes and Marsden \cite{MR495662} who studied the stability problem for a more general class of systems related to equations occurring in panel flutter problems, including the \textbf{OLS} as a special case, using Liapunov's direct method in the framework of \textit{operator semigroups} in Hilbert space (see also \cite[Section 7.5]{MR1262126}). Shortly afterwards Röh \cite{Roh1982} and, independently, Miloslavskii \cite{Miloslavskii1985,Miloslavskii1983} rigourised the spectral approach used in the engineering literature within the semigroup framework to obtain stability results for the \textbf{OLS}. Their works can in fact rightly be regarded as somewhat a milestone in the spectral approach to the stability analysis of general vibrating systems governed by partial differential equations. (Earlier steps in this direction can be found in \cite{MR0478948,MR0585481,MR0543945,MR0355650,MR0355650s}, where in the papers by Walker and Infante \cite{MR0478948} and Carr and Malhardeen \cite{MR0585481,MR0543945} it was motivated by and applied to the study of stability of Beck's Problem.)

Independently, a rather complete development of the spectral theory of \textit{quadratic operator pencils} and its applications to the stability analysis of the \textbf{OLS} and its variants was given in a series of papers notably by Adamyan and Pivovarchik \cite{MR1727987}, Artamonov \cite{Artamonov2000}, Lyong \cite{Lyong1993}, Miloslavskii \cite{Miloslavskii1981,Miloslavskii1991}, Pivovarchik \cite{Pivovarchik1993,Pivovarchik1994,Pivovarchik1992}, Shkalikov \cite{Shkalikov1996}, and Zefirov et al.\ \cite{MiloslavskiiEtAl1985}. In particular, the possibility first raised in \cite{PaidoussisIssid1974} of having regions in the $\eta\beta$-plane for stability recovery, or so-called gyroscopic stabilisation, meaning that although solutions of the \textbf{OLS} are unstable for some $\eta$ and a critical value of $\beta$, they can regain stability for the same $\eta$ and some $\beta$ above the critical value, was explored in \cite{MiloslavskiiEtAl1985}. It was found that when $\eta=2\pi+\epsilon^2$ (small $\epsilon$), the \textbf{OLS} no longer is unstable for values of $\beta$ above $3^{-{1}/{2}}$ -- i.e., stability recovery takes place. However, as stated in that paper, stability recovery is not possible in the intervals $\left(2n-1\right)\pi\leq \eta\leq2n\pi$, $n\in\mathbf{N}$, regardless of the values of $\beta$. The subject of computing estimates for the \textbf{OLS} of the size of the stability regions in the $\eta\beta$-plane was also extensively studied in the above-cited works in \cite{Pivovarchik1993,Pivovarchik1994,Pivovarchik2005} (see also \cite[Chapter 4]{MollerPivovarchik2015}).

We return to the stability problem for the $\mathbf{CLS}$ which has not been addressed so far. Unfortunately, in general, the requirement of stability places rather severe restrictions on the admissibility of damping when a system is circulatory-gyroscopic: If there is flow then arbitrarily small damping can lead to destabilisation. This phenomenon -- sometimes referred to in the literature as ``destabilisation paradox'' -- has been known at an abstract level (generalising the classic Kelvin--Tait--Cetaev theorem) for a relatively long time, see \cite{Miloslavskii1991,Pivovarchik1992,Shkalikov1996} and the references therein. So in the case of the $\mathbf{CLS}$ we may expect that when there is flow, $\eta>0$, stability can be destroyed by the presence of boundary damping represented by $\kappa>0$, no matter how small. We will show this is not the case here. In fact, we prove that under the condition that $\gamma> \eta^2$, which we will from now assume apply, solutions of the $\mathbf{CLS}$ are exponentially stable when boundary damping is present. The result is interesting but perhaps not surprising from physical considerations; yet, we are unaware of any mathematical proof in the literature that deduces the exponential stability of the $\mathbf{CLS}$ as a consequence of the spectral approach taken here. In this respect, it is an interesting observation that if we replace the boundary conditions \eqref{eq_06} by
\begin{equation*}
\left.\frac{\partial^2 \bm{w}\left(s,t\right)}{\partial s^2}\right|_{s=1}=\bm{u}\left(t\right),\quad \left.\frac{\partial^3 \bm{w}\left(s,t\right)}{\partial s ^3}\right|_{s =1}= 0,
\end{equation*}
then the tube can never be exponentially stabilised by the boundary controller \eqref{eq_06ssyy} in conjunction with the other boundary conditions \eqref{eq_02b}, because, in this case, zero is an eigenvalue (see Remark \ref{remarkxdd123}). In fact, in this case, the system is not even stable.

Where appropriate in this paper, we examine the connection between the two abstract formulations mentioned in the review above -- semigroup formulation and quadratic operator-pencil formulation -- as needed to treat the stability problem for the $\mathbf{CLS}$ without restriction to a specific abstract framework for the boundary-eigenvalue problem associated with it. Such examinations, in general, require careful study of both the spectral properties, i.e.\ existence, location, multiplicities, and, in particular, asymptotics of eigenvalues, and of the basis properties or basisness for root vectors (eigen- and associated vectors or chains of eigen- and associated vectors, see Definition \ref{def01}) of the operators involved. For this purpose, let us begin with the ``standard'' boundary-eigenvalue problem associated with the $\mathbf{CLS}$. Anticipating the results of the paper, we are fully justified in considering a separable solution of the form $\bm{w}\left(\,\cdot\,,t\right)=w\exp\left(\lambda t\right)$ to obtain the following boundary-eigenvalue problem for $w$ with spectral parameter $\lambda$:
\begin{equation}\label{eq_1sffss1}
\left\{\begin{split}
w^{(4)}-(\gamma-\eta^2)\,w''+2\lambda\beta\eta w'+\lambda^2w&=0,\\
w\left(0\right)=w''\left(0\right)&=0,\\
w''\left(1\right)+\lambda\kappa w'\left(1\right)&=0,\\
w^{(3)}\left(1\right)-(\gamma-\eta^2)\, w'\left(1\right)&=0,
\end{split}\right.
\end{equation}
where the prime denotes differentiation with respect to $s$, as usual. It can be checked that with the spectral parameter transformation $\lambda\mapsto i\rho^2$, the boundary-eigenvalue problem \eqref{eq_1sffss1} is Birkhoff regular in the sense of \cite[Definition 7.3.1]{MennickenMoller2003}.

The corresponding spectral problem applying to a semigroup formulation in the energy space setting is obtained on the basis of a linearisation (in the spectral parameter) of \eqref{eq_1sffss1}. That is, by setting $v=\lambda w$ one obtains the boundary-eigenvalue problem
\begin{equation}\label{eq_1sffss1ss}
\left\{\begin{split}
v&=\lambda w,\\
w^{(4)}-(\gamma-\eta^2)\,w''+2\beta\eta v'+\lambda v&=0,\\
w\left(0\right)=w''\left(0\right)&=0,\\
w''\left(1\right)+\kappa v'\left(1\right)&=0,\\
w^{(3)}\left(1\right)-(\gamma-\eta^2)\, w'\left(1\right)&=0.
\end{split}\right.
\end{equation}

\subsection{Abstract formulation}

We now reformulate boundary-eigenvalue problems \eqref{eq_1sffss1} and \eqref{eq_1sffss1ss} abstractly in appropriate Hilbert spaces. Since \eqref{eq_1sffss1} has $\lambda$-dependent boundary conditions, it is impossible to recast it abstractly as a spectral problem for linear operators in $\bm{L}_2$ (as was done for the special case of the \textbf{OLS} in the papers cited previously). However, it gives rise to the spectral problem for the operator pencil
\begin{equation}\label{eq_11}
\mathcal{M}\left(\lambda\right)=\lambda^2G+\lambda D+C,\quad \lambda\in\mathbf{C},
\end{equation}
in the Hilbert product space
\begin{equation*}
\mathbb{Y}=\bm{L}_2\left(0,1\right)\times\mathbf{C}
\end{equation*}
with the induced inner product and norm denoted by $\left<\,\cdot\,,\,\cdot\,\right>$ and $\left\|\,\cdot\,\right\| $, respectively; here
\begin{equation*}
\left<y,\tilde{y}\right>=\int_0^1w\left(s\right)\overline{\tilde{w}\left(s\right)}ds+
c\overline{\tilde{c}},\quad \left\| y\right\|=\left<y,{y}\right>^{{1}/{2}}.
\end{equation*}
The operators $C$, $D$, $G$ appearing in \eqref{eq_11} have domains
\begingroup
\allowdisplaybreaks
\begin{gather*}
\bm{D}\left({C}\right)=\left\{y=\left(\begin{matrix}
w\\
c
\end{matrix}\right)\in\mathbb{Y}~\middle|
~\begin{gathered}
w\in {\bm{H}}^4\left(0,1\right), \\
 c=w'\left(1\right),~w\left(0\right)=w''\left(0\right)=0,~
w^{(3)}\left(1\right)-(\gamma-\eta^2)\, w'\left(1\right)=0
\end{gathered}\right\},\\
\bm{D}\left({D}\right)=\bm{D}\left({C}\right),\quad \bm{D}\left({G}\right)=\mathbb{Y}
\end{gather*}
\endgroup
and are defined by
\begin{equation*}
C{y}\coloneqq\left(\begin{matrix}
w^{(4)}-(\gamma-\eta^2)\, w''\\
w''\left(1\right)
\end{matrix}
\right),\quad D{y}\coloneqq\left(\begin{matrix}
2\beta\eta w'\\
\kappa w'\left(1\right)
\end{matrix}
\right),\quad 
Gy\coloneqq\left(\begin{matrix}
w\\
0
\end{matrix}
\right).
\end{equation*}
Throughout this paper, we will use the standard notation $\bm{H}^k\left(0,1\right)$, $k\in\mathbf{N}_0$, for the Sobolev--Hilbert space of order $k$ associated with $\bm{L}_2\left(0,1\right)$. We will also use the same symbols $\left<\,\cdot\,,\,\cdot\,\right>$ and $\left\|\,\cdot\,\right\| $ to denote, respectively, any one of the inner products and norms when it is perfectly clear from the usage which one is intended, but we reserve $\left\|\,\cdot\,\right\|_0 $ and $\left<\,\cdot\,,\,\cdot\,\right>_0$ for the usual $\bm{L}_2$ norm and inner product.

Using \cite[Theorems 10.3.5 and 10.3.8]{MollerPivovarchik2015}, one can verify that (i) $C=C^\ast\gg0$ and $C^{-1}$ is compact; (ii) $D$ is completely subordinate to $C$ or, here equivalently, $C$-compact (in the sense of \cite[Section IV.1.3]{Kato1995}); (iii) $D\ge 0$ and is of rank 1 for $\eta=0$; and (iv) $G\ge 0$ and is bounded. By definition, the domain of $\mathcal{M}\left(\lambda\right)$ is given by $\bm{D}\left(\mathcal{M}\left(\lambda\right)\right)=\bm{D}\left(C\right)\cap\bm{D}\left(D\right)\cap\bm{D}\left(G\right) =\bm{D}\left(C\right)$ and thus is $\lambda$-independent. For every $y\in\bm{D}\left(C\right)$, there holds $\mathcal{M}\left(\lambda\right)y=0$ if and only if \eqref{eq_1sffss1} holds. So $\mathcal{M}\left(\lambda\right)$ represents the boundary-eigenvalue problem \eqref{eq_1sffss1}. Since \eqref{eq_1sffss1} with $\lambda$ replaced by $i\rho^2$ is Birkhoff regular, the spectrum of the pencil $\mathcal{M}$ consists of an infinite number of eigenvalues of finite type or normal eigenvalues. Indeed, since $\mathcal{M}\left(\lambda\right)$ is a relatively compact perturbation of the Fredholm operator $\mathcal{M}\left(0\right)=C$, we have, by \cite[Theorem IV.5.26]{Kato1995}, that $\mathcal{M}\left({\lambda}\right)$ has a compact resolvent. (Note that the assumption $\gamma> \eta^2$ is necessary for the assertion (i) to hold.) We recall the following standard notions from the spectral theory of operator pencils in a Hilbert space (e.g.\ see \cite[Definitions 1.1.2 and 1.1.3]{MollerPivovarchik2015}).
\begin{definition}\label{def01}
Let $\lambda\mapsto \mathcal{L}\left({\lambda}\right)$ be a mapping from $\mathbf{C}$ into the set of closed linear operators in a Hilbert space. The resolvent set of $\mathcal{L}$, denoted by $\varrho\left(\mathcal{L}\right)$, is the set of $\lambda$ for which $\mathcal{L}\left({\lambda}\right)$ is boundedly invertible (i.e.\ the inverse $\mathcal{L}\left({\lambda}\right)^{-1}$ exists, is closed and bounded). We call $\mathcal{L}\left({\lambda}\right)^{-1}$ the resolvent of $\mathcal{L}\left({\lambda}\right)$. The spectrum of $\mathcal{L}$ is the set of $\lambda\not\in\varrho\left(\mathcal{L}\right)$ and is denoted by $\sigma\left(\mathcal{L}\right)$. If a number $\lambda\in\mathbf{C}$ has the property that $\operatorname{ker}\mathcal{L}\left({\lambda}\right)\neq\left\{0\right\}$
then it is called an eigenvalue of $\mathcal{L}$ and there exists an eigenvector $x\neq 0$ corresponding to $\lambda$ such that $\mathcal{L}\left({\lambda}\right)x=0$. The vectors $x_0,x_1,\ldots,x_{m-1}$ are said to form a chain, of length $m$, consisting of an eigenvector $x_0$ of $\mathcal{L}$ corresponding to an eigenvalue $\lambda_0$ and the vectors $x_1,x_2,\ldots,x_{m -1}$ associated with it, or simply a chain of root vectors of $\mathcal{L}$ corresponding to $\lambda_0$, if
\begin{equation}\nonumber
\sum^j_{k=0}\left.\frac{1}{k!}\frac{d^k}{d\lambda^k}\mathcal{L}\left(\lambda\right)\right|_{\lambda=\lambda_0}x_{j-k}=0,\quad j=0,1,\ldots,m -1.
\end{equation}
The geometric multiplicity of an eigenvalue ${\lambda}_0$ is the number of linearly independent eigenvectors in a system of chains of root vectors of $\mathcal{L}$ corresponding to $\lambda_0$ and is defined as $\operatorname{dim}\operatorname{ker}\mathcal{L}\left({\lambda}_0\right)$. The algebraic multiplicity of an eigenvalue $\lambda_0$ is the maximum value of the sum of the lengths of chains corresponding to the linearly independent eigenvectors and is denoted by ${\lambda}_0$. We call an eigenvalue ${\lambda}_0$ semisimple if its geometric and algebraic multiplicities are equal, $\nu\left(\lambda_0\right)=\operatorname{dim}\operatorname{ker}\mathcal{L}\left({\lambda}_0\right)$, and simple if $\nu\left(\lambda_0\right)=\operatorname{dim}\operatorname{ker}\mathcal{L}\left({\lambda}_0\right)=1$. If an eigenvalue ${\lambda_0}$ is an isolated point in $\sigma\left(\mathcal{L}\right)$ and $\mathcal{L}\left({\lambda}_0\right)$ is a Fredholm operator, then we call $\lambda_0$ a normal eigenvalue. The set of all normal eigenvalues is denoted by $\sigma_0\left(\mathcal{L}\right)$, which forms the discrete spectrum
of $\mathcal{L}$.
\end{definition}


We now introduce the $\mathbf{CLS}$ operator and the space in which it will be considered. Let
\begin{equation*}
\mathbb{X}= \mathring{\bm{H}}^2\left(0,1\right)\times \bm{L}_2\left(0,1\right),
\end{equation*}
where here and in the associated operator domain definitions below
\begin{equation*}
\mathring{\bm{H}}^k\left(0,1\right)\coloneqq\left\{w\in {\bm{H}}^k\left(0,1\right)~\middle|~w\left(0\right)=0\right\},\quad k=1,2.
\end{equation*}
The space $\mathbb{X}$ is a closed subspace of ${\bm{H}}^2\left(0,1\right)\times \bm{L}_2\left(0,1\right)$ which we will call the energy space or \textit{state space}. Let us define the norm of an element ${x}\in \mathbb{X}$ as the norm induced by the inner product $\left<\,\cdot\,,\,\cdot\,\right>$ in $\mathbb{X}$, i.e.\
\begin{equation}\label{eq_11abc}
\left<x,\tilde{x}\right>=\int^1_0\left[w''\left(s\right)\overline{\tilde{w}''\left(s\right)}+(\gamma-\eta^2)\, w'\left(s\right)\overline{\tilde{w}'\left(s\right)}+v\left(s\right)\overline{\tilde{v}\left(s\right)}\right]ds,\quad
\left\|x\right\|= \left<x,{x}\right>^{{1}/{2}},
\end{equation}
a form equivalent to the square root of the energy (cf.\ \eqref{eqaas345}). With this norm
$\mathbb{X}$ becomes a Hilbert space.

In $\mathbb{X}$ we introduce the $\mathbf{CLS}$ operator
\begin{equation}\label{eq191922}
T= A+B
\end{equation}
with
\begin{equation}\label{eq_09xa}
A{x}\coloneqq\left(\begin{matrix}
v\\
-w^{(4)}+(\gamma-\eta^2)\, w''
\end{matrix}\right), \quad B{x}\coloneqq\left(\begin{matrix}
0\\
-2\beta\eta v'\
\end{matrix}\right),
\end{equation}
where $A$ has domain
\begin{equation}\label{eq_09xb}
\bm{D}\left({A}\right)=\left\{x=\left(\begin{matrix}
w\\
v
\end{matrix}\right)\in\mathbb{X}~\middle|
~\begin{gathered}
w\in{\bm{H}}^4\left(0,1\right)\cap \mathring{\bm{H}}^2\left(0,1\right),~v\in\mathring{\bm{H}}^2\left(0,1\right), \\
w''\left(0\right)=0,~ w''\left(1\right)+\kappa v'\left(1\right)=0,\\
w^{(3)}\left(1\right)-(\gamma-\eta^2)\, w'\left(1\right)=0
\end{gathered}\right\},
\end{equation}
while $B$ has domain
\begin{equation}\label{eq_09xc}
\bm{D}\left({B}\right)=\left\{x=\left(\begin{matrix}
w\\
v
\end{matrix}\right)\in\mathbb{X}~\middle|
~ w\in\mathring{\bm{H}}^2\left(0,1\right),~v\in\mathring{\bm{H}}^1\left(0,1\right)\right\}.
\end{equation}
Evidently, by definition,
\begin{equation*}
\bm{D}\left({T}\right)=\bm{D}\left({A}\right).
\end{equation*} 
The linearised boundary-eigenvalue problem \eqref{eq_1sffss1ss} is then verified at once to be equivalent to the spectral problem for the linear pencil
\begin{equation}\label{eq1lipenc12}
\mathcal{P}\left(\lambda\right)=\lambda I-T,\quad \lambda\in\mathbf{C},
\end{equation}
in the state space $\mathbb{X}$.

It is easily seen that a chain of root vectors of the pencil $\mathcal{P}$ coincides with a chain of root vectors of $T$, corresponding to the same eigenvalue. Therefore, the eigenvalues of $T$ coincide, including multiplicities, with those of the pencils $\mathcal{P}$ and $\mathcal{M}$ and are the same as those from the boundary-eigenvalue problem \eqref{eq_1sffss1}. Moreover, a chain of root functions $w_0,w_1,\ldots,w_{m-1}$ of \eqref{eq_1sffss1} corresponds to a chain of root vectors $y_0,y_1,\ldots,y_{m-1}$ of the pencil $\mathcal{M}$ ($m$, we recall, the length of the chains).
If we were now to consider the problem of basisness in the space $\mathbb{Y}$ we would note that it was connected to that in $\bm{L}_2\left(0,1\right)$ and would begin with consideration of the root functions. So the question of obvious interest is as to what may be inferred from basisness of the root functions as far as what the basisness of a cahin of root vectors $x_0,x_1,\ldots,x_{m-1}$ of $T$ in $\mathbb{X}$ might be. We give a careful analysis of this matter in Section \ref{sec_4}.

\subsection{Well-posedness}

As will be seen, the $\mathbf{CLS}$ operator $T$ is maximal dissipative with compact resolvent (i.e., $\mathcal{P}\left({\lambda}\right)^{-1}$ is compact for some and thus for all $\lambda\in\varrho\left(\mathcal{P}\right)$). We can then invoke the familiar Lumer--Phillips theorem from the theory of strongly continuous semigroups of operators or $C_0$-semigroups (see, e.g., \cite[Section II.3.b]{EngelNagel1999}, \cite[Section 1.4]{Pazy1983} or \cite[Section I.4.2]{Krein1971}) to show that writing the $\mathbf{CLS}$ in the form
\begin{equation}\label{eq_03xab}
\dot{{x}}\left(t\right)=Tx\left(t\right),\quad {{x}}\left(t\right)=\left(\begin{matrix}
\bm{w}\left(\,\cdot\,,t\right)\\
\bm{v}\left(\,\cdot\,,t\right)
\end{matrix}\right),\quad {{x}}\left(0\right)=x_0=\left(\begin{matrix}
g\left(\,\cdot\,\right)\\
h\left(\,\cdot\,\right)
\end{matrix}\right)
\end{equation}
gives the solution
\begin{equation}\label{eq_03}
{{x}}\left(t\right)=\mathcal{U}\left(t\right)x_0,\quad t\ge0,
\end{equation}
for any suitably smooth $x_0$, e.g., for $x_0\in \bm{D}\left(A\right)$. Here $\mathcal{U}\left(t\right)\left(\coloneqq\exp\left(tT\right)\right)$ is a contraction $C_0$-semigroup on $\mathbb{X}$ (embeds in a $C_0$-group, actually, as will be indicated in Section \ref{sec_4b}), with infinitesimal generator $T$. This holds if and only if $\bm{w}\left(s,t\right)$ is a solution of the $\mathbf{CLS}$ and
$\bm{v}\left(s,t\right)=\left(\partial \bm{w}/\partial t\right)\left(s,t\right)$. Then, clearly (with norm as given in \eqref{eq_11abc})
\begin{equation*}
\mathbf{E}\left(t\right)=\frac{1}{2}\left\|{{x}}\left(t\right)\right\|^2.
\end{equation*}
Therefore, when dealing with our programme of analysing the asymptotic behaviour of the energy of solutions of the $\mathbf{CLS}$ in time, we can study the asymptotic behaviour of $\left\|x\left(t\right)\right\|$ as $t\rightarrow\infty$.

The following theorem disposes of the question of well-posedness of the initial-value problem \eqref{eq_03xab}, hence of the $\mathbf{CLS}$, guaranteeing that a unique solution of the form \eqref{eq_03} exists for any $x_0\in \bm{D}\left(A\right)$.
\begin{theorem}\label{L-2-1}
The $\mathbf{CLS}$ operator $T$ has a compact inverse, is maximal dissipative for $\eta,\kappa>0$ and skewadjoint for $\eta=\kappa=0$. Thus $T$ is for $\eta,\kappa\geq 0$ the infinitesimal generator of a contraction $C_0$-semigroup on $\mathbb{X}$.
\end{theorem}
\begin{proof}
For the first statement consider the equation $Ax=\tilde{x}$ with $\tilde{x}\in \mathbb{X}$ and $x\in \bm{D}\left(A\right)$. Equivalently
\begin{equation}\label{eq_1sffss1xxx}
\left\{\begin{split}
v&=\tilde{w},\\
-w^{(4)}+(\gamma-\eta^2)\, w''&=\tilde{v},\\
w\left(0\right)=w''\left(0\right)&=0,\\
w''\left(1\right)+\kappa v'\left(1\right)&=0,\\
w^{(3)}\left(1\right)-(\gamma-\eta^2)\, w'\left(1\right)&=0.
\end{split}\right.
\end{equation}
Integrating the differential equation in \eqref{eq_1sffss1xxx} twice from $0$ to $1$, making use of the boundary conditions $w\left(0\right)=w''\left(0\right)=0$ and $w^{(3)}\left(1\right)-(\gamma-\eta^2)\, w'\left(1\right)=0$, we get
\begin{equation*}
w''\left(s\right)-(\gamma-\eta^2)\, w\left(s\right)=\int_0^sdt\int_t^1\tilde{v}\left(r\right)dr.
\end{equation*}
So
\begin{equation*}
w\left(s\right)=a\sinh\sqrt{\gamma-\eta^2}s+\frac{1}{\sqrt{\gamma-\eta^2}}\int_0^s\sinh\sqrt{\gamma-\eta^2}\left(s-r\right)\tilde{V}\left(r\right)dr,
\end{equation*}
arbitrary constant $a$, with the integral term $\tilde{V}\left(s\right)=\int^s_0dt\int^1_t\tilde{v}\left(r\right)dr$ is the solution of the differential equation satisfying the aforementioned three boundary conditions. Application of the remaining boundary condition gives
\begin{equation*}
a=\frac{-\sqrt{\gamma-\eta^2}\int_0^1\sinh\sqrt{\gamma-\eta^2}\left(1-r\right)\tilde{V}\left(r\right)dr-\tilde{V}\left(1\right)-\kappa\tilde{w}'\left(1\right)}{(\gamma-\eta^2)\,\sinh\sqrt{\gamma-\eta^2}}\coloneqq b\left(\tilde{w},\tilde{v}\right),
\end{equation*}
where we have used that $v=\tilde{w}$. Thus the inverse operator
\begin{equation*}
(A^{-1}\tilde{x})\left(s\right)=\left(\begin{matrix}
 b\left(\tilde{w},\tilde{v}\right)\sinh\sqrt{\gamma-\eta^2}s+\frac{1}{\sqrt{\gamma-\eta^2}}\int_0^s\sinh\sqrt{\gamma-\eta^2}\left(s-r\right)\tilde{V}\left(r\right)dr\\
\tilde{w}\left(s\right)
\end{matrix}\right)
\end{equation*}
exists, $A^{-1}\tilde{x}=x\in \bm{D}\left(A\right)$ being the unique solution of $Ax=\tilde{x}$, and this together with the closed graph theorem yields that $A^{-1}$ is a closed and bounded operator; hence, $A$ is boundedly invertible. Since $\bm{D}\left(A\right)\subset \mathbb{X}$, $A^{-1}$ is compact via Sobolev's embeddings ${\bm{H}}^4\left(0,1\right)\cap \mathring{\bm{H}}^2\left(0,1\right)\xhookrightarrow{} \mathring{\bm{H}}^2\left(0,1\right) \xhookrightarrow{} \bm{L}_2\left(0,1\right)$. Furthermore, we have that
\begin{equation*}
BA^{-1}\left(\begin{matrix}
\tilde{w}\\
\tilde{v}
\end{matrix}\right) =\left(\begin{matrix}
0\\
-2\beta\eta\tilde{w}'
\end{matrix}\right)
\end{equation*}
and so, $\tilde{w}$ being an element of $\mathring{\bm{H}}^2\left(0,1\right)$, we have that $BA^{-1}$ is compact. Using the compactness of $A^{-1}$ together with that of $BA^{-1}$ we see that $T$ is a relatively compact perturbation of the  Fredholm operator $A$. Therefore $T^{-1}$ is compact (see \cite[Theorem IV.5.26]{Kato1995}).

Now let us verify dissipativity of $T$. For any $x\in \bm{D}\left(A\right)$, we compute without difficulty
\begin{equation}\label{eq_13}
2\operatorname{Re}\left<Tx,x\right>=\left<Tx,x\right>+\left<x,Tx\right>=-2\beta\eta\left|v\left(1\right)\right|^2-2\kappa\,|v'\left(1\right)\!|^2,
\end{equation}
and that $\operatorname{Re}\left<Tx,x\right>\le 0$ is clear using that $\beta\in\left(0,1\right)$ and $\eta,\kappa\geq0$. We then conclude that $T$ is a closed dissipative operator and hence via the previous arguments is maximal dissipative. Indeed the previous arguments show that $0\in\varrho\left(T\right)$ and therefore since $\lambda I-T$ is onto for sufficiently small $\lambda>0$ its range coincides with $\mathbb{X}$ (which is in turn a simple consequence of the contraction fixed point theorem, see, e.g., \cite[Proposition 7.1]{MR2759829}). From \eqref{eq_13} it is clear that $T$ is skewsymmetric for $\eta=\kappa=0$, and the skewadjointness follows from the surjectivity.

The second statement follows from the first by the Lumer--Phillips theorem because in Hilbert space a closed, maximal dissipative operator is densely defined (see \cite[Proposition 7.1]{MR2759829} or \cite[Section V.3.10]{Kato1995}).
\end{proof}


\begin{remark}\label{remarkxdd123}
We note that when in \eqref{eq_1sffss1} the boundary condition $w^{(3)}\left(1\right)-(\gamma-\eta^2)\, w'\left(1\right)=0$  is replaced by $w^{(3)}\left(1\right)=0$ we have the so-called degenerate case where $\lambda=0$ is an eigenvalue. Let us quickly verify this. Clearly any nontrivial solution of the differential equation in \eqref{eq_1sffss1} in this case can be written explicitly in the form
\begin{equation*}
w\left(s\right)=a_1+a_2s+a_3\sinh\sqrt{\gamma-\eta^2}s+a_4\cosh\sqrt{\gamma-\eta^2}s
\end{equation*}
for some arbitrary coefficients $a_r$, $r=1,2,3,4$. Using the boundary conditions at $s=0$ we have $a_1=a_4=0$. If $w''\left(1\right)=0$ and $w^{(3)}\left(1\right)-(\gamma-\eta^2)\, w'\left(1\right)=0$, we obtain $a_2=a_3=0$. This gives the trivial solution $w\left(s\right)=0$ and hence a contradiction. Now suppose $w^{(3)}\left(1\right)=0$ instead. Then $a_3=0$ and so $\lambda=0$ is an eigenvalue. The corresponding eigenfunction is given by $w\left(\lambda,s\right)=s$ (up to a multiplicative constant).
\end{remark}

\subsection{Characterisation of stability and exponential stability}\label{sec2_3}

We need some basic facts from stability of $C_0$-semigroups which can be found in any standard text (such as \cite[Chapter 5]{CurtainZwart1995} or \cite[Chapter V]{EngelNagel1999}).

The basic definition of stability of the semigroup $\mathcal{U}\left(t\right)$ is that for some $\varepsilon\ge 0$ there should exist a constant $M\ge 1$ such that $\left\|\mathcal{U}\left(t\right)\right\| \leq M e^{-\varepsilon t}$, $t\ge 0$, and hence that, for any $x_0\in \bm{D}\left(A\right)$,
\begin{equation}\label{eq_05}
\left\|\mathcal{U}\left(t\right)x_0\right\| \leq M e^{-\varepsilon t}\left\|x_0\right\|,\quad t\ge 0.
\end{equation}
Then solutions of the $\mathbf{CLS}$ are called stable or bounded stable for $\varepsilon=0$ (i.e., the contractive case for $M=1$) and exponentially stable when $\varepsilon>0$. In the latter situation, we say that $\mathcal{U}\left(t\right)$ is exponentially stable with energy decay rate $\varepsilon$. (We wish to point out that in the papers using an abstract description of the $\mathbf{OLS}$, e.g.\ those by Milsolavskii and Röh cited at the beginning of the section, \eqref{eq_05} is considered with respect to the same energy or $\mathbb{X}$-topology, where the space $\mathring{\bm{H}}^2\left(0,1\right)$ in the definition of the state space $\mathbb{X}$ is $\mathring{\bm{H}}^2\left(0,1\right)\coloneqq\left\{w\in {\bm{H}}^2\left(0,1\right)~\middle|~w\left(0\right)=w'\left(0\right)=0\right\}$, but a norm different from ours in \eqref{eq_11abc} is used, namely,
\begin{equation*}
\left\|x\right\|'= \left[\int^1_0\left(\left|w''\left(s\right)\right|^2+\left|v\left(s\right)\right|^2\right)ds\right]^{{1}/{2}}.
\end{equation*}
In the cantilever case, it is not difficult to see that $\left\|\,\cdot\,\right\|'$ will be a norm but, in our case, it is not.)

In \cite{MiloslavskiiEtAl1985}, it is shown that for stability it is necessary and sufficient that the eigenvalues be purely imaginary and semisimple, the latter meaning that there be only corresponding eigenvectors and no associated vectors (see Definition \ref{def01}). So in the terminology of \cite{MiloslavskiiEtAl1985}, in the \textbf{CLS} case, by stability of the pencil $\mathcal{M}$ is meant stability of the $\mathbf{CLS}$, or equivalently of the semigroup generated by the $\mathbf{CLS}$ operator $T$. However, special technical difficulties arise when it comes to stating conditions for the $\mathbf{CLS}$ to be exponentially stable. To see this let there exist $\varepsilon>0$. We first recall that exponential stability is equivalent to the condition that the type 
\begin{equation*}
\omega\left(T\right)\coloneqq\lim_{t\rightarrow\infty}\frac{1}{t}\log\left\|\mathcal{U}\left(t\right)\right\|\leq -\varepsilon;
\end{equation*}
but while there always holds, by the Hille--Yoshida theorem,
\begin{equation}\label{eq05xav}
\omega\left(T\right)\geq \sup\left\{\operatorname{Re}\left(\lambda\right)~\middle|~\lambda\in\sigma\left(T\right)\right\},
\end{equation}
equality in \eqref{eq05xav} unfortunately does not necessarily hold if $T$ is an unbounded operator and so, equivalently, the spectral mapping property $\sigma\left(\mathcal{U}\left(t\right)\right)\backslash\left\{0\right\}=\exp\left(t\sigma\left(T\right)\right)$, $t\ge 0$, is not valid in general. It is a consequence, therefore, of the aforementioned characterisation that it is not generally justified to require for exponential stability only that the spectrum of $T$ be confined to the open left half of the complex plane, in the sense that $\sup\left\{\operatorname{Re}\left(\lambda\right)~\middle|~\lambda\in\sigma\left(T\right)\right\}<0$ and hence that
\begin{equation}\label{eq05x}
\sup\left\{\operatorname{Re}\left(\lambda\right)~\middle|~\lambda\in\sigma\left(T\right)\right\}\leq -\varepsilon<0.
\end{equation}

There are conditions for the inequality \eqref{eq05xav} to allow for equality, in which case one says that $T$ satisfies the \textit{spectrum-determined growth} assumption or condition (a term introduced by Triggiani in \cite{Triggiani1975}). For example, equality holds for the large class of eventually norm-continuous semigroups, including the special cases of eventually differentiable semigroups -- i.e.\ $\left\|\,\cdot\,\right\|$-differentiable -- and compact semigroups (see \cite[Corollary IV.3.12]{EngelNagel1999}). A condition of particular interest to us, however, is formulated in terms of the unconditional or Riesz basisness (i.e.\ basisness equivalent to orthonormal basisness) of the root vectors, which can be deduced readily from a theorem of Miloslavskii \cite[Theorem 1]{Miloslavskii1985}. Indeed, concerning Riesz basisness, we have the following elementary argument: Let $\left\{\lambda_n\in\mathbf{C}\right\}$ be a properly enumerated sequence of simple eigenvalues of the $\mathbf{CLS}$ operator $T$ and suppose there is a corresponding sequence of eigenvectors $\left\{x_n\right\}$ of $T$ which forms a Riesz basis for ${\mathbb{X}}$.
There exists then a unique sequence of vectors $\left\{z_n\right\}$ such that the pairs $\left\{x_n,z_n\right\}$ form a biorthogonal sequence in ${\mathbb{X}}$. Since every $x_0\in{\mathbb{X}}$ has a norm-convergent expansion in the Riesz basis of eigenvectors, which we assume has the series form
\begin{equation*}
x_0=\sum^\infty_{n=1}\left<x_0,z_n\right> x_n,
\end{equation*}
the solution \eqref{eq_03} can be represented as
\begin{equation*}
\mathcal{U}\left(t\right)x_0=\sum^\infty_{n=1}e^{\lambda_nt}\left<x_0,z_n\right> x_n,\quad t\ge 0.
\end{equation*}
Assume that \eqref{eq05x} holds. By using (cf.\ \cite[Section VI.2.(2.4)]{GohbergKrein1969})
\begin{equation*}
M_1\sum^\infty_{n=1}\left|\left<x_0,z_n\right> \right|^2\leq\left\|x_0\right\|^2 \leq M_2\sum^\infty_{n=1}\left|\left<x_0,z_n\right> \right|^2
\end{equation*}
for some constants $M_1,M_2> 0$ we estimate for $t\ge 0$
\begin{equation*}
\left\|\mathcal{U}\left(t\right)x_0\right\|^2 \leq M_2\sum^\infty_{n=1}\left|e^{\lambda_nt}\left<x_0,z_n\right> \right|^2\leq M_2e^{-2\varepsilon t}\sum^\infty_{n=1}\left|\left<x_0,z_n\right> \right|^2\leq\frac{M_2}{M_1}e^{-2\varepsilon t}\left\|x_0\right\|^2 ,
\end{equation*}
which is essentially \eqref{eq_05}.

As we have just remarked, stability of the $\mathbf{CLS}$ is really the same as stability of the pencil $\mathcal{M}$. The question remains: Can we conclude exponential stability of the $\mathbf{CLS}$ from that of the pencil $\mathcal{M}$; and if so, what does exponential stability of the pencil $\mathcal{M}$ actually mean? Ultimately, we would like to know if the root vectors of $T$ also form a Riesz basis for ${\mathbb{X}}$, when those of the pencil $\mathcal{M}$ do for ${\mathbb{Y}}$. This appears to be a worthwhile question in its own right, the answer to which is not at all obvious. In this paper we will provide at least a partial answer to the question and study the problem of basisness in $\bm{L}_2\left(0,1\right)$.

The organisation of the rest of the paper is as follows. In Section \ref{sec_3} we analyse the spectral properties of the $\mathbf{CLS}$ operator $T$. We use the eigenvalue asymptotics in Section \ref{sec_4} to prove that there exists a corresponding sequence of eigenfunctions of the boundary-eigenvalue problem \eqref{eq_1sffss1} forming a Riesz basis for $\bm{L}_2\left(0,1\right)$. We then prove that the sequence of eigenvectors of $T$ have the property of being a Riesz basis for the state space $\mathbb{X}$. The proofs are direct and based on several verifications that the sequences of eigenfunctions and eigenvectors are quadratically close to some orthonormal bases for the spaces considered. They are then completed by appeal to a well-known theorem due to Bari (see \cite[Chapter VI]{GohbergKrein1969}, \cite[Section V.2.5]{Kato1995} or \cite[Chapter 1]{Young1980} for details). As will be seen in Section \ref{sec_3}, the restriction $\gamma> \eta^2$ imposed on the tensile forces acting on the tube guarantees that, for $\eta,\kappa>0$, the eigenvalues approach a vertical asymptote in the left half-plane a finite distance away from the imaginary axis. As a consequence we have from the above considerations that the $\mathbf{CLS}$ is exponentially stable, which constitutes the principal result of Section \ref{sec_4b}. Finally, the implications of our study for future research are discussed in the conclusions, Section \ref{sec_5}.
%

\section{Spectrum and asymptotics of eigenvalues}\label{sec_3}

The spectral problem for the operator pencil given by \eqref{eq1lipenc12} is
\begin{equation}\label{eq_10}
\mathcal{P}\left(\lambda\right)x=\left(\lambda I-T\right)x=0,\quad x\in \bm{D}\left({A}\right),\quad \lambda\in \mathbf{C}.
\end{equation}
We can conclude from Theorem \ref{L-2-1} that the $\mathbf{CLS}$ operator $T$ has a compact resolvent ($0\in\varrho\left(T\right)$ and $T^{-1}$ is compact) and so, as we have stated in the paragraph preceding Definition \ref{def01}, this implies that the spectrum is discrete,
\begin{equation*}
\sigma_0\left(T\right)=\sigma\left(T\right)=\sigma\left(\mathcal{M}\right)=\sigma_0\left(\mathcal{M}\right),
\end{equation*}
consisting solely of normal eigenvalues which accumulate only at infinity and which coincide with the eigenvalues from the boundary-eigenvalue problem \eqref{eq_1sffss1}. The following is a useful theorem on the location of the eigenvalues.
\begin{theorem}\label{T-3-1}
The spectrum of the $\mathbf{CLS}$ operator $T$ is symmetric with respect to the real axis and lies in the closed left half-plane, excluding the origin. When $\eta,\kappa>0$ the spectrum is confined to the open left half-plane.
\end{theorem}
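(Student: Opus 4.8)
The plan is to dispatch the three assertions separately, relying on the dissipation identity \eqref{eq_13} together with the invertibility furnished by Proposition \ref{L-2-1}, and to reduce the purely imaginary case to a uniqueness statement for the underlying fourth-order equation.

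For the symmetry about the real axis I would observe that every coefficient in the boundary-eigenvalue problem \eqref{eq_1sffss1} is real. Hence if $w$ is an eigenfunction with eigenvalue $\lambda$, then taking complex conjugates throughout shows that $\overline{w}$ solves the same problem with eigenvalue $\overline{\lambda}$, the real-coefficient boundary conditions being preserved under conjugation; thus $\lambda\in\sigma\left(T\right)$ if and only if $\overline{\lambda}\in\sigma\left(T\right)$. For the confinement to the closed left half-plane, note that any eigenvalue satisfies $Tx=\lambda x$ for some eigenvector $x\in\bm{D}\left(A\right)$, so that $\operatorname{Re}\left<Tx,x\right>=\left(\operatorname{Re}\lambda\right)\left\|x\right\|^2$; since $T$ is dissipative this is nonpositive, forcing $\operatorname{Re}\lambda\le 0$. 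That the origin is excluded is immediate, for by Proposition \ref{L-2-1} the inverse $T^{-1}$ is compact, hence bounded, so $0\in\varrho\left(T\right)$.

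The substantive claim is the strict inequality when $\eta,\kappa>0$, i.e.\ the absence of eigenvalues on the imaginary axis, and here I would argue by contradiction. Suppose $\lambda=i\tau$, $\tau\in\mathbf{R}$, is an eigenvalue with eigenvector $x=\left(w,v\right)$, where $v=\lambda w$; since the origin is already excluded we may take $\tau\neq0$. Evaluating \eqref{eq_13} at this eigenvector and using $\operatorname{Re}\lambda=0$ gives
\begin{equation*}
0=2\left(\operatorname{Re}\lambda\right)\left\|x\right\|^2=-2\beta\eta\left|v\left(1\right)\right|^2-2\kappa\left|v'\left(1\right)\right|^2.
\end{equation*}
Because $\beta\eta>0$ and $\kappa>0$, both nonpositive terms must vanish, so $v\left(1\right)=v'\left(1\right)=0$, and as $\tau\neq0$ and $v=i\tau w$ this yields $w\left(1\right)=w'\left(1\right)=0$. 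Substituting $w'\left(1\right)=0$ into the two boundary conditions at $s=1$ in \eqref{eq_1sffss1} then gives in addition $w''\left(1\right)=0$ and $w^{(3)}\left(1\right)=0$.

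Consequently $w$ satisfies the fourth-order linear differential equation in \eqref{eq_1sffss1} while the entire Cauchy data $w\left(1\right),w'\left(1\right),w''\left(1\right),w^{(3)}\left(1\right)$ vanish at the endpoint $s=1$; by the uniqueness theorem for the initial-value problem of a linear ordinary differential equation this forces $w\equiv0$, whence $v=i\tau w\equiv0$ and $x=0$, contradicting that $x$ is an eigenvector. Thus no purely imaginary eigenvalue can exist and the spectrum lies in the open left half-plane. I expect the only delicate point to be the passage from the energy balance to the vanishing of the endpoint data; once the two boundary terms in \eqref{eq_13} are forced to zero, the resulting overdetermination — six homogeneous conditions at a single endpoint for a fourth-order equation — makes the concluding ODE-uniqueness step routine.
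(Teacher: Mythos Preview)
Your proof is correct and follows essentially the same route as the paper's: symmetry via the real-coefficient structure, confinement to the closed left half-plane from dissipativity, exclusion of the origin from compact invertibility, and the contradiction argument for imaginary eigenvalues via the dissipation identity \eqref{eq_13} followed by ODE uniqueness at $s=1$. The only cosmetic difference is that you phrase the symmetry at the level of the boundary-eigenvalue problem \eqref{eq_1sffss1} while the paper argues it for the operator $T$ directly; and your closing remark about ``six homogeneous conditions at a single endpoint'' should read four (the full Cauchy data $w\left(1\right),w'\left(1\right),w''\left(1\right),w^{(3)}\left(1\right)$), which is exactly determined rather than overdetermined, but this does not affect the argument.
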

\begin{proof}
Let ${x}\left( \neq 0\right)$ be an eigenvector of $T$ corresponding to an eigenvalue ${\lambda} $. Then, from \eqref{eq_10},
\begin{equation*}
\left(\lambda I-T\right)x =0.
\end{equation*}
With $T$ as given by \eqref{eq191922}--\eqref{eq_09xc} we have 
\begin{equation*}
\overline{\left(\lambda I-T\right)x }=(\overline{\lambda } I-T)\,\overline{x }=0,
\end{equation*}
which means that $\overline{x}$ is an eigenvector corresponding to the eigenvalue $\overline{\lambda }$. This proves that the spectrum of $T$ is symmetric with respect to the real axis.

We now take the inner product of $\left(\lambda I-T\right)x $ with $x $ to obtain
\begin{equation*}
\left<\left(\lambda I-T\right)x ,x \right>=\lambda \left\|x \right\|^2-\left<Tx ,x \right>=0.
\end{equation*}
The real part of this equation is
\begin{equation}\label{eq_15}
\operatorname{Re}\left(\lambda\right)\left\|x \right\|^2-\operatorname{Re}\left<Tx ,x \right>=0.
\end{equation}
By Theorem \ref{L-2-1}
\begin{equation*}
\operatorname{Re}\left(\lambda\right) =\frac{\operatorname{Re}\left<Tx ,x \right>}{\left\|x \right\|^2}\leq 0,
\end{equation*}
proving that the spectrum of $T$ lies in the closed left half-plane. That the origin does not belong to the spectrum is also an immediate consequence of Theorem \ref{L-2-1}.

For the proof of the second statement, namely that when $\eta,\kappa>0$ then $\operatorname{Re}\left(\lambda\right) <0$, suppose ${\lambda }$ is a purely imaginary eigenvalue, $\operatorname{Re}\left(\lambda\right) =0$, with eigenvector $x$. Then from \eqref{eq_15} we have $\operatorname{Re}\left<Tx ,x \right>=0$ and it follows from \eqref{eq_13} that
\begin{equation*}
-\beta\eta\left|v \left(1\right)\right|^2-\kappa\,|v '\left(1\right)\!|^2=0.
\end{equation*}
This implies, since $\beta\in\left(0,1\right)$ and $\eta,\kappa>0$ by assumption,
\begin{equation*}
|v '\left(1\right)\!|^2=0,\quad \left|v \left(1\right)\right|^2=0
\end{equation*}
and, as 
\begin{equation*}
x =\left(\begin{matrix}
w \\
v 
\end{matrix}\right)
\end{equation*}
is an eigenvector, so $v =\lambda w $,
\begin{equation*}
\left|\lambda \right|^2|w '\left(1\right)\!|^2=0,\quad \left|\lambda \right|^2\left|w \left(1\right)\right|^2=0.
\end{equation*}
Therefore $w \left(1\right)=w '\left(1\right)=0$ since $\left|\lambda \right|>0$ by Theorem \ref{L-2-1}. In this case, setting $\lambda=i\mu$, $\mu\in \mathbf{R}$, $w=w\left(s\right)$ satisfies the boundary-eigenvalue problem
\begin{equation*}
\begin{split}
w^{(4)}-(\gamma-\eta^2)\,w''+2i\beta\eta\mu w'-\mu^2w&=0,\\
w\left(0\right)=w''\left(0\right)&=0,\\
 w\left(1\right)=w'\left(1\right)=w''\left(1\right)=w^{(3)}\left(1\right)&=0.
\end{split}
\end{equation*}
It follows that only trivial solutions can result from the boundary conditions at $s=1$. However, this contradicts the assumption that
\begin{equation*}
{x} =\left(\begin{matrix}
w \\
v 
\end{matrix}\right)=\left(\begin{matrix}
w \\
\lambda w 
\end{matrix}\right)
\end{equation*}
is an eigenvector. Consequently, we must have $\operatorname{Re}\left(\lambda\right) <0$ as $\eta,\kappa>0$.
\end{proof}

As has been stated repeatedly in the paper, $\lambda$ is in the spectrum $\sigma\left(T\right)$ (and
hence is an eigenvalue) of $T$ if and only if, for that value of $\lambda$, the boundary-eigenvalue problem \eqref{eq_1sffss1} has a nontrivial solution. In the next subsection, we discuss eigenvalue asymptotics, by restricting attention to \eqref{eq_1sffss1}.

\subsection{Asymptotics of eigenvalues}\label{sec3}

Our goal in this subsection is to produce explicit asymptotic expansions of the eigenvalues with large modulus of the boundary-eigenvalue problem \eqref{eq_1sffss1} for $\eta,\kappa>0$, and thereby obtain suitable (for our purposes) asymptotic estimates of their location. This requires only standard techniques of complex analysis after a preliminary development of asymptotic expaniosn for the solutions of the differential equation in \eqref{eq_1sffss1} (see \cite[Section II.4]{Naimark1967} for more details on the underlying method).

According to Theorem \ref{T-3-1} all eigenvalues lie in the left half-plane and those with nonzero imaginary part must come in conjugate pairs. So we need only consider the boundary-eigenvalue problem in the second quadrant of the complex plane corresponding to eigenvalues with $\operatorname{Im}\left(\lambda\right)\ge 0$. The sector
\begin{equation*}
\mathscr{S}_0\coloneqq\left\{\rho\in\mathbf{C}~\middle|~0\leq \arg\left(\rho\right) \leq\frac{\pi}{4}\right\}
\end{equation*}
corresponds to this quadrant under the spectral parameter transformation $\lambda\mapsto i\rho^2$. For $\mathscr{S}$ the four roots of $-1$ have the ordering
\begin{equation*}
\operatorname{Re}\left(-\rho\right)\leq \operatorname{Re}\left(i\rho\right)\leq \operatorname{Re}\left(-i\rho\right) \leq \operatorname{Re}\left(\rho\right),\quad \rho\in\mathscr{S}.
\end{equation*}
Obviously, for $\rho\in\mathscr{S}$, the inequalities $\operatorname{Re}\left(-\rho\right)=-\left|\rho\right|\cos\left(\arg\left(\rho\right)\right)\leq-c\left|\rho\right|$ and $\operatorname{Re}\left(i\rho\right)=-\left|\rho\right|\sin\left(\arg\left(\rho\right)\right)\leq 0$ then hold with a constant $c>0$. Thus, asymptotically in $\mathscr{S}$, i.e.\ for large $\left|\rho\right|$, we have that $\left|e^{i\rho}\right|\leq1$ and $\left|e^{-\rho }\right|=\mathscr{O}\,(e^{-c\left|\rho\right|})$, which we shall use subsequently. First we prove a lemma.
\begin{lemma}\label{L-4-1}
In the sector $\mathscr{S}$, there exists a fundamental system $\left\{w_r\left(\lambda,\,\cdot\,\right)\right\}^4_{r=1}$ of the differential equation in \eqref{eq_1sffss1} which, under the spectral parameter transformation $\lambda\mapsto i\rho^2$ associated with $\rho\in\mathscr{S}$, has the following asymptotic expansion for large $\left|\rho\right|$:
\begin{equation*}
w^{(k)}_r\left(\rho,s\right)=\left(i^r\rho \right)^ke^{i^r\rho s}\left(1+{{f}_r\left(s\right)}+\frac{i^r{f}_{r1}\left(s\right)+k{f}'_{r}\left(s\right)}{i^r\rho}+\mathscr{O}\,(\rho^{-2})\right),\quad r=1,2,3,4,
\end{equation*}
for $k=0,1,2,3$, $s\in\left[0,1\right]$, where
\begin{equation*}
f_r\left(s\right)=-1+e^{-\left(-1\right)^{r}\frac{i\beta\eta}{2} s},\quad {f}_{r1}\left(s\right)= \frac{\left(-i\right)^r}{4}\left(\frac{\beta^2\eta^2}{2}+\gamma-\eta^2\right) s e^{-\left(-1\right)^{r}\frac{i\beta\eta}{2}s},\quad r=1,2,3,4.
\end{equation*}
\end{lemma}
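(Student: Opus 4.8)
The plan is to treat this as a standard large-parameter (WKB/Birkhoff--Tamarkin) asymptotic analysis of the fourth-order equation in \eqref{eq_1sffss1}. First I would insert the substitution $\lambda=i\rho^2$ to rewrite the equation as
\begin{equation*}
w^{(4)}-(\gamma-\eta^2)\,w''+2i\rho^2\beta\eta\,w'-\rho^4 w=0,
\end{equation*}
in which $\rho$ is a large parameter on $\mathscr{S}$. The leading balance comes from the principal part $w^{(4)}-\rho^4 w$, giving the characteristic equation $\mu^4=\rho^4$, whose four roots are $\mu_r=i^r\rho$, $r=1,2,3,4$; the gyroscopic term $2i\rho^2\beta\eta\,w'$ and the tension term $(\gamma-\eta^2)w''$ are of orders $\rho^3$ and $\rho^2$ once one sets $\mu\sim\rho$, hence subprincipal. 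On $\mathscr{S}$ the roots $i^r$ are distinct with strictly ordered real parts (as recorded just before the lemma), so the exponentials $e^{i^r\rho s}$ are pairwise separated in growth; this separation, together with the Birkhoff regularity already noted after \eqref{eq_1sffss1}, is exactly what guarantees the existence of an asymptotic fundamental system of the claimed shape via the general theory of \cite{MennickenMoller2003}.

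Next I would make the ansatz $w_r(\rho,s)=e^{i^r\rho s}\sum_{m\ge0}(i^r\rho)^{-m}\psi_{r,m}(s)$, substitute it, and collect powers of $\rho$. Writing $\mu=i^r\rho$ and using $\mu^4=\rho^4$, $\mu^2=(-1)^r\rho^2$, the top-order terms cancel and the coefficient of $\rho^3$ gives the first transport equation
\begin{equation*}
2(-1)^r\psi_{r,0}'+i\beta\eta\,\psi_{r,0}=0,
\end{equation*}
whose normalised solution with $\psi_{r,0}(0)=1$ is precisely $\psi_{r,0}(s)=e^{(-1)^{r+1}\frac{i\beta\eta}{2}s}=1+f_r(s)$; it is the gyroscopic term that produces this amplitude. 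The coefficient of $\rho^2$ then yields an inhomogeneous first-order equation for $\psi_{r,1}$ with the \emph{same} homogeneous part and a forcing assembled from $\psi_{r,0}$, $\psi_{r,0}''$ and $(\gamma-\eta^2)\psi_{r,0}$. Because the forcing turns out to be a constant multiple of $\psi_{r,0}$, the resonant ansatz $\psi_{r,1}=A s\,\psi_{r,0}$ works, and evaluating the constant (then normalising at $s=0$) gives exactly $\psi_{r,1}=f_{r1}$, the secular factor $s$ reflecting the resonance.

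With the amplitudes in hand I would recover the derivative formulae by the Leibniz rule: since
\begin{equation*}
\frac{d^k}{ds^k}\bigl(e^{i^r\rho s}g\bigr)=e^{i^r\rho s}(i^r\rho)^k\Bigl[g+\frac{k}{i^r\rho}\,g'+\mathscr{O}\,(\rho^{-2})\Bigr],
\end{equation*}
applied to $g=\psi_{r,0}+(i^r\rho)^{-1}\psi_{r,1}+\cdots$, the prefactor $(i^r\rho)^k$ and the leading bracket $1+f_r$ appear at once, while the $\mathscr{O}(\rho^{-1})$ term collects $\psi_{r,1}$ together with $k\,\psi_{r,0}'=k f_r'$. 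This is the origin of the mixed numerator $i^r f_{r1}+k f_r'$ in the statement.

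The formal computation of $\psi_{r,0}$ and $\psi_{r,1}$ is routine; the genuine obstacle is to make the expansion rigorous, i.e.\ to show that a true fundamental system with these leading terms exists and that the remainder is $\mathscr{O}(\rho^{-2})$ \emph{uniformly} in $s\in[0,1]$ and in $\arg\rho\in[0,\pi/4]$. I would dispose of this either by quoting the asymptotic-fundamental-system theorem of \cite{MennickenMoller2003} directly (its hypotheses being met thanks to the distinctness of the $i^r$ and the Birkhoff regularity), or, if a self-contained argument is wanted, by reducing the equation to a first-order system, diagonalising the principal part, and closing a contraction/integral-equation estimate on $\mathscr{S}$; the ordering of the $\operatorname{Re}(i^r\rho)$ and the bounds \eqref{308} are precisely what keep the relevant integral operators uniformly bounded and so control the remainder.
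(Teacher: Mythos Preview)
Your approach is essentially the same as the paper's: both substitute $\lambda=i\rho^2$, posit an exponential-times-amplitude ansatz with $\omega_r=i^r$, derive and solve the transport equations for the first two amplitudes (the paper writes these as a system \eqref{diffsys01} for $f_r,f_{r1}$ with $f_r(0)=f_{r1}(0)=0$, exactly your $\psi_{r,0}-1$ and, up to the factor $i^r$, your $\psi_{r,1}$), and then differentiate termwise to obtain the $k$-derivative formulae. The only notable difference is in handling the remainder: the paper simply writes the tail as $H_r(\rho,s)/\rho^2$ and asserts uniform boundedness of $H_r$, whereas you propose either quoting the Birkhoff--Tamarkin theorem from \cite{MennickenMoller2003} or closing a contraction argument on the first-order system---your route is more explicit about where the rigor comes from, but the computational content is identical.
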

\begin{proof}
If in the differential equation in \eqref{eq_1sffss1} we make the change from $\lambda$ to $i\rho^2$, then it may be rewritten as
\begin{equation}\label{eqsss_07}
w^{(4)}-(\gamma-\eta^2)\,w''+2i\rho^2\beta\eta w'-\rho^4w=0.
\end{equation}
Take $\left\{w_r\left({\rho},\,\cdot\,\right)\right\}^4_{r=1}$ to be a fundamental system of \eqref{eqsss_07} having the following expansion:
\begin{equation}\label{eqrel023ss}
w_r\left(\rho,s\right)=e^{\rho\omega_r s}\left(1+f_r\left(s\right)+\frac{f_{r1}\left(s\right)}{\rho}+\frac{H_r\left(\rho,s\right)}{\rho^2} \right),\quad r=1,2,3,4,
\end{equation}
for $s\in\left[0,1\right]$, where $\omega_r=i^{r}$ and $f_r$, $f_{r1}$ and $ H_r\left(\rho,\,\cdot\,\right)$ are smooth functions, say analytic on $\left[0,1\right]$. Substituting \eqref{eqrel023ss} in \eqref{eqsss_07} and collecting terms according to powers of $\rho$, we get
\begingroup
\allowdisplaybreaks
\begin{align*}
0&=\rho^3\,\Bigl[4\omega^3_r f_r' +2i\beta\eta \omega_r \left(1+f_r \right)\Bigr] \\
&\qquad +\rho^2\,\Bigl[4\omega^3_r f_{r1}' +2i\beta\eta \omega_r f_{r1}+ 6\omega^2_rf''_r +2i\beta\eta f'_r -(\gamma-\eta^2)\,\omega_r^2 \left(1+f_r \right)\Bigr]\\
&\qquad +\rho\,\Bigl[4\omega^3_r H'_r\left(\rho,\,\cdot\,\right)+2i\beta\eta \omega_r H_r\left(\rho,\,\cdot\,\right)+6\omega^2_rf''_{r1} +2i\beta\eta f'_{r1} -(\gamma-\eta^2)\,\omega_r^2 f_{r1} \\
&\qquad+4\omega_r f^{(3)}_r -2\,(\gamma-\eta^2)\,\omega_r f'_r \Bigr]+6\omega^2_rH''_r\left(\rho,\,\cdot\,\right) +2i\beta\eta H'_r\left(\rho,\,\cdot\,\right)\\
&\qquad-(\gamma-\eta^2)\,\omega_r^2 H_r\left(\rho,\,\cdot\,\right)+ 4\omega_r f^{(3)}_{r1} -2\,(\gamma-\eta^2) \,\omega_r f'_{r1} +f^{(4)}_r -(\gamma-\eta^2)\,f''_r \\
&\qquad+\rho^{-1}\,\Bigl[f^{(4)}_{r1} -(\gamma-\eta^2)\, f''_{r1} +4\omega_r H^{(3)}_r\left(\rho,\,\cdot\,\right)-2\,(\gamma-\eta^2)\,\omega_r H'_r\left(\rho,\,\cdot\,\right)\Bigr]\\
&\qquad+\rho^{-2}\,\Bigl[H^{(4)}_r\left(\rho,\,\cdot\,\right)-(\gamma-\eta^2)\,H''_r\left(\rho,\,\cdot\,\right)\Bigr].
\end{align*}
\endgroup
Let us consider from this the following system of differential equations for $f_r$, $f_{r1}$:
\begin{equation}\label{diffsys01}
\left\{\begin{split}
4\omega^3_r f_r' +2i\beta\eta \omega_r \left(1+f_r \right)&=0,\\
4\omega^3_r f_{r1}' +2i\beta\eta \omega_r f_{r1} + 6\omega^2_rf''_r +2i\beta\eta f'_r -(\gamma-\eta^2)\,\omega_r^2 \left(1+f_r \right)&=0.
\end{split}\right.
\end{equation}
Solving this system \eqref{diffsys01} with $f_r\left(0\right)=0$ and $f_{r1}\left(0\right)=0$ we have
\begin{equation}\label{eqx32}
f_r\left(s\right)=-1+e^{-\frac{i\beta\eta}{2}\omega^2_r s},\quad
 f_{r1}\left(s\right)= \frac{\omega^3_r}{4}\left(\frac{\beta^2\eta^2}{2}+\gamma-\eta^2\right) s e^{-\frac{i\beta\eta}{2}\omega^2_r s}
\end{equation}
for $s\in\left[0,1\right]$. So if $f_{r1}\left(s\right)$ satisfies the second differential equation in \eqref{diffsys01} then, with the first rewritten as $f_r'=-\frac{i\beta\eta}{2} \omega_r^2 \left(1+f_r \right)$, it is seen to satisfy the differential equation
\begin{equation}\label{eqxdsdd32}
 f_{r1}' +\frac{i\beta\eta}{2} \omega^2_r f_{r1}+\frac{\omega_r}{2i\beta\eta}\left(\frac{\beta^2\eta^2}{2}+\gamma-\eta^2\right)f'_r=0
\end{equation}
as well. The substitution
\begin{equation*}
f'_{r}\left(s\right)=-\frac{i\beta\eta}{2}\omega^2_r e^{-\frac{i\beta\eta}{2}\omega_r^2s}
\end{equation*}
from \eqref{eqx32}, along with the fact that
\begin{equation*}
\left(f'_{r1}\left(s\right)+\frac{i\beta\eta}{2}\omega_r^2f_{r1}\left(s\right)\right)e^{\frac{i\beta\eta}{2}\omega_r^2s}=\frac{d}{ds}\left(f_{r1}\left(s\right)e^{\frac{i\beta\eta}{2}\omega_r^2s}\right),
\end{equation*}
transforms \eqref{eqxdsdd32} into
\begin{equation*}
\left(f_{r1}e^{\frac{i\beta\eta }{2}\omega_r^2s}\right)'-\frac{\omega^3_r}{4}\left(\frac{\beta^2\eta^2}{2} + \gamma-\eta^2\right)= 0.
\end{equation*}
From this we get that the $ H_r\left(\rho,s\right)$ are uniformly bounded, say $\left|H_r\left(\rho,s\right)\right|\leq N$, $r=1,2,3,4$, some constant $N>0$, with respect to $s\in [0,1]$ for $\rho\in \mathscr{S}$ as $\left|\rho\right|$ becomes large. The rest of the proof follows the usual lines. On differentiating \eqref{eqrel023ss} $k$ times, factoring out the terms $\left(\rho \omega_{r}\right)^k$, and collecting terms of $\mathscr{O}\,(\rho^{-2})$, we obtain
\begin{equation*}
w^{(k)}_r\left(\rho,s\right)=\left(\rho\omega_r \right)^ke^{\rho\omega_r s}\left(1+{{f}_r\left(s\right)}+\frac{\omega_r{f}_{r1}\left(s\right)+k{f}'_{r}\left(s\right)}{\rho\omega_r}+\mathscr{O}\,(\rho^{-2})\right),\quad r=1,2,3,4,
\end{equation*}
for $k=0,1,2,3$. Recalling $\omega_r=i^{r}$ and noting $\omega^2_r=\left(-1\right)^r$, $\omega^3_r=\left(-i\right)^r$ in \eqref{eqx32} gives the desired result.
\end{proof}

\begin{theorem}\label{T-4-1}
The nonreal eigenvalues of the boundary-eigenvalue problem \eqref{eq_1sffss1} are geometrically and algebraically simple and asymptotically, i.e.\ for large $n\in\mathbf{N}$,
\begin{equation}\label{eqasymff567}
\lambda_n=i{\rho}_n^2,\quad {\rho}_n=\left(n-\frac{1}{2}\right)\pi
+\left[\frac{\frac{\beta^2\eta^2}{2}+\gamma-\eta^2}{2}+i\left(\beta\eta+ \frac{1}{\kappa}\right)\right]\frac{1}{2\left(n-\frac{1}{2}\right)\pi}+\mathscr{O}\,(n^{-2}).
\end{equation}
\end{theorem}
\begin{proof}
Let $\lambda$ be an eigenvalue of \eqref{eq_1sffss1} and $w\left(\lambda,\,\cdot\,\right)$ be a corresponding eigenfunction. Again replacing $\lambda$ by $i\rho^2$ for $\rho\in\mathscr{S}$, we have by Lemma \ref{L-4-1} the linear combination $w\left(\rho,\,\cdot\,\right)=\sum_{r=1}^4a_rw_r\left(\rho,\,\cdot\,\right)$ with respect to the fundamental system $\left\{w_r\left({\rho},\,\cdot\,\right)\right\}^4_{r=1}$, with coefficients $a_r$ possibly dependent on the spectral parameter. Insertion into the boundary conditions yields that $\lambda$ is an eigenvalue of \eqref{eq_1sffss1} if and only if for $\rho\in\mathscr{S}$ we have $\det M\left(\rho\right)=0$, where
\begingroup
\allowdisplaybreaks
\begin{align*}
M\left(\rho\right)&=\left(\begin{matrix}
w_1\left(\rho,0\right)&w_2\left(\rho,0\right)\\
w''_1\left(\rho,0\right)&w''_2\left(\rho,0\right)\\
w''_1\left(\rho,1\right)+i\kappa\rho^2 w'_1\left(\rho,1\right)&w''_2\left(\rho,1\right)+i\kappa\rho^2 w'_2\left(\rho,1\right)\\
w^{(3)}_1\left(\rho,1\right)-(\gamma-\eta^2)\, w'_1\left(\rho,1\right)&w^{(3)}_2\left(\rho,1\right)-(\gamma-\eta^2)\, w'_2\left(\rho,1\right)
\end{matrix}\right.\\[0.2em]
&\quad\left.\hspace{0.2\linewidth}
\begin{matrix}
w_3\left(\rho,0\right)&w_4\left(\rho,0\right)\\
w''_3\left(\rho,0\right)&w''_4\left(\rho,0\right)\\
w''_3\left(\rho,1\right)+i\kappa\rho^2 w'_3\left(\rho,1\right)&w''_4\left(\rho,1\right)+i\kappa\rho^2 w'_4\left(\rho,1\right)\\
w^{(3)}_3\left(\rho,1\right)-(\gamma-\eta^2)\, w'_3\left(\rho,1\right)&w^{(3)}_4\left(\rho,1\right)-(\gamma-\eta^2)\, w'_4\left(\rho,1\right)
\end{matrix}\right).
\end{align*}
\endgroup
Geometric simplicity of the eigenvalues follows from the fact that, as can be checked, the rank of $M\left(\rho\right)$ is 3, meaning that there is exactly one linearly independent solution of \eqref{eq_1sffss1}.

Now, suppose $\rho_0\in\mathscr{S}$ is a zero of $\det M\left(\,\cdot\,\right)$ and let $w\left(\rho_0,\,\cdot\,\right)$ be a corresponding eigenfunction of \eqref{eq_1sffss1} under the change from $\lambda$ to $i\rho^2$. Then, with $\hat{a}_r=\hat{a}_r\left(\rho_0\right)$, $r=1,2,3,4$, we write $w\left(\rho_0,\,\cdot\,\right)=\sum_{r=1}^4\hat{a}_r\left(\rho_0\right)w_r\left(\rho_0,\,\cdot\,\right)$, where we recall that $w\left(\rho,\,\cdot\,\right)=\sum_{r=1}^4a_rw_r\left(\rho,\,\cdot\,\right)$ with respect to the fundamental system $\left\{w_r\left({\rho},\,\cdot\,\right)\right\}^4_{r=1}$, the coefficients ${a}_r={a}_r\left(\rho\right)$ being analytic for $\rho$ in a neighbourhood of $\rho_0$, and ${a}_r\left(\rho_0\right)=\hat{a}_r\left(\rho_0\right)$. Then in a neighbourhood of $\rho_0$,
\begin{align*}
&{a}_1\left(\rho\right)\det M\left(\rho\right)\\
&\quad=\left|\begin{matrix}
w\left(\rho,0\right)&w_2\left(\rho,0\right)\\
w''\left(\rho,0\right)&w''_2\left(\rho,0\right)\\
w''\left(\rho,1\right)+i\kappa\rho^2 w'\left(\rho,1\right)&w''_2\left(\rho,1\right)+i\kappa\rho^2 w'_2\left(\rho,1\right)\\
w^{(3)}\left(\rho,1\right)-(\gamma-\eta^2)\, w'\left(\rho,1\right)&w^{(3)}_2\left(\rho,1\right)-(\gamma-\eta^2)\, w'_2\left(\rho,1\right)
\end{matrix}\right.\\[0.2em]
&\quad\left.\hspace{0.2\linewidth}
\begin{matrix}
w_3\left(\rho,0\right)&w_4\left(\rho,0\right)\\
w''_3\left(\rho,0\right)&w''_4\left(\rho,0\right)\\
w''_3\left(\rho,1\right)+i\kappa\rho^2 w'_3\left(\rho,1\right)&w''_4\left(\rho,1\right)+i\kappa\rho^2 w'_4\left(\rho,1\right)\\
w^{(3)}_3\left(\rho,1\right)-(\gamma-\eta^2)\, w'_3\left(\rho,1\right)&w^{(3)}_4\left(\rho,1\right)-(\gamma-\eta^2)\, w'_4\left(\rho,1\right)
\end{matrix}\right|.
\end{align*}
Set ${u}\left(\rho,s\right)= (\partial {w}/\partial \rho)\left(\rho,s\right)$, assuming that ${u}\left(\rho, \,\cdot\,\right)$ is an associated function. We calculate
\begingroup
\allowdisplaybreaks
\begin{align*}
&{a}_1\left(\rho_0\right)\left.\frac{d}{d\rho}\left(\det M\left(\rho\right)\right)\right|_{\rho=\rho_0}\\
&\quad=\left|\begin{matrix}
{u}\left(\rho_0,0\right)&w_2\left(\rho_0,0\right)\\
{u}''\left(\rho_0,0\right)&w''_2\left(\rho_0,0\right)\\
{u}''\left(\rho_0,1\right)+i\kappa\rho_0^2 {u}'\left(\rho_0,1\right)+2i\kappa\rho_0w'\left(\rho_0,1\right)&w''_2\left(\rho_0,1\right)+i\kappa\rho_0^2 w'_2\left(\rho_0,1\right)\\
{u}^{(3)}\left(\rho_0,1\right)-(\gamma-\eta^2)\, {u}'\left(\rho_0,1\right)&w^{(3)}_2\left(\rho_0,1\right)-(\gamma-\eta^2)\, w'_2\left(\rho_0,1\right)
\end{matrix}\right.\\[0.2em]
&\quad\left.\hspace{0.2\linewidth}
\begin{matrix}
w_3\left(\rho_0,0\right)&w_4\left(\rho_0,0\right)\\
w''_3\left(\rho_0,0\right)&w''_4\left(\rho_0,0\right)\\
w''_3\left(\rho_0,1\right)+i\kappa\rho_0^2 w'_3\left(\rho_0,1\right)&w''_4\left(\rho_0,1\right)+i\kappa\rho_0^2 w'_4\left(\rho_0,1\right)\\
w^{(3)}_3\left(\rho_0,1\right)-(\gamma-\eta^2)\, w'_3\left(\rho_0,1\right)&w^{(3)}_4\left(\rho_0,1\right)-(\gamma-\eta^2)\, w'_4\left(\rho_0,1\right)
\end{matrix}\right|.
\end{align*}
\endgroup
It can be shown that the above determinant then does not vanish. Hence
\begin{equation*}
\det M\left(\rho_0\right)=0,\quad \left.\frac{d}{d\rho}\left(\det M\left(\rho\right)\right)\right|_{\rho=\rho_0}\ne 0
\end{equation*}
and we conclude that $\rho_0$ is a simple zero of $\det M\left(\,\cdot\,\right)$ (see \cite[Section I.2.1]{Naimark1967}). Since $\rho_0$ is an arbitrary zero of $\det M\left(\,\cdot\,\right)$, we have as a consequence that all of the nonreal zeros and hence the nonreal eigenvalues of \eqref{eq_1sffss1} are simple.

As for the eigenvalue asymptotics, we begin by putting $\Delta\left(\lambda\right)\coloneqq \det M\left(\rho\right)$ , freely mixing here and in the following the spectral parameters $\lambda$ and $\rho$, and calculating the asymptotic expansion of $\Delta\left(\lambda\right)$. Since $\frac{\beta^2\eta^2}{2}+{\gamma}-{\eta}^2$ appears repeatedly in the calculations to follow, we economise the notation by putting
\begin{equation*}
\theta\coloneqq \frac{\beta^2\eta^2}{2}+{\gamma}-{\eta}^2.
\end{equation*}
Applying Lemma \ref{L-4-1} and its proof to the columns of $M\left(\rho\right)$, we calculate
\begingroup
\allowdisplaybreaks
\begin{align*}\label{eqwetr555}
w_r\left(\rho,0\right)&=1, \\
w''_r\left(\rho,0\right)&=\left(\rho\omega_r\right)^2\left[1-\frac{i\beta\eta}{\rho}\omega_r\right]_2,\\
w''_r\left(\rho,1\right)+i\kappa\rho^2 w'_r\left(\rho,1\right)&=i\kappa\rho^2\left(\rho\omega_r\right)e^{\rho\omega_r}e^{-\frac{i\beta\eta}{2}\omega^2_r}\left[1+\frac{1}{4\rho\omega_r}\,\left(\theta-2i\beta\eta\omega_r^2\right)+\frac{\omega_r}{i\kappa\rho}\right]_2,\\
w^{(3)}_r\left(\rho,1\right)-(\gamma-\eta^2)\,w'_r\left(\rho,1\right)&=\left(\rho\omega_r\right)^3e^{\rho\omega_r}e^{-\frac{i\beta\eta}{2}\omega^2_r}\left[1+\frac{1}{4\rho\omega_r}\,\left( \theta-6i\beta\eta\omega_r^2\right)\right]_2
\end{align*}
\endgroup
for $r=1,2,3,4$, where we have used the obvious standard notation $\left[a\right]_2\coloneqq a+\mathscr{O}\,(\rho^{-2})$. Substituting these expressions in $\Delta\left(\lambda\right)$ and dividing out the common factors of the rows and the columns, recalling that in $\mathscr{S}$, since $\omega_r=i^r$ and $\omega^2_r=\left(-1\right)^r$, we have that $\omega_1=-\omega_3=i$, $\omega_2=-\omega_4=-1$ and $\omega_1^2=\omega_3^2=-1$, $\omega_2^2=\omega_4^2=1$, we find
\begin{equation*}
\Delta\left(\lambda\right)=i\kappa\rho^8e^{\rho\omega_4}\det\left(
\Delta_1\left(\rho\right) , \Delta_2\left(\rho\right) , \Delta_3\left(\rho\right) , \Delta_4\left(\rho\right)
\right)
\end{equation*}
where the column vectors $\Delta_r\left(\rho\right)$ are defined as follows:
\begingroup
\allowdisplaybreaks
\begin{align*}
\Delta_1\left(\rho\right)&=\left(\begin{matrix}
e^{-\rho\omega_1}e^{\frac{i\beta\eta}{2}\omega_1^2}\\
\omega_1^2e^{-\rho\omega_1}e^{\frac{i\beta\eta}{2}\omega_1^2}\left[1-\frac{i\beta\eta}{\rho}\omega_1\right]_2 \\
\omega_1\left[1+\frac{1}{4\rho\omega_1}\left( \theta -2i\beta\eta\omega_1^2\right)+\frac{\omega_1}{i\kappa\rho}\right]_2\\
 \omega_1^3\left[1+\frac{1}{4\rho\omega_1}\left( \theta-6i\beta\eta\omega_1^2\right)\right]_2
\end{matrix}\right),\\
\Delta_2\left(\rho\right)&=\left(\begin{matrix}
e^{\frac{i\beta\eta}{2}\omega_2^2} \\
 \omega_2^2e^{\frac{i\beta\eta}{2}\omega_2^2}\left[1-\frac{i\beta\eta}{\rho}\omega_2\right]_2 \\
 \omega_2e^{\rho\omega_2}\left[1+\frac{1}{4\rho\omega_2}\left( \theta -2i\beta\eta\omega_2^2\right)+\frac{\omega_2}{i\kappa\rho}\right]_2 \\
 \omega_2^3e^{\rho\omega_2}\left[1+\frac{1}{4\rho\omega_2}\left( \theta-6i\beta\eta\omega_2^2\right)\right]_2
\end{matrix}\right),\\
\Delta_3\left(\rho\right)&=\left(\begin{matrix}
 e^{-\rho\omega_3}e^{\frac{i\beta\eta}{2}\omega_3^2}\\
 \omega_3^2e^{-\rho\omega_3}e^{\frac{i\beta\eta}{2}\omega_3^2}\left[1-\frac{i\beta\eta}{\rho}\omega_3\right]_2 \\
 \omega_3\left[1+\frac{1}{4\rho\omega_3}\left( \theta -2i\beta\eta\omega_3^2\right)+\frac{\omega_3}{i\kappa\rho}\right]_2\\
 \omega_3^3\left[1+\frac{1}{4\rho\omega_3}\left( \theta-6i\beta\eta\omega_3^2\right)\right]_2
\end{matrix}\right),\\
\Delta_4\left(\rho\right)&=\left(\begin{matrix}
 e^{-\rho\omega_4}e^{\frac{i\beta\eta}{2}\omega_4^2}\\
 \omega_4^2e^{-\rho\omega_4}e^{\frac{i\beta\eta}{2}\omega_4^2}\left[1-\frac{i\beta\eta}{\rho}\omega_4\right]_2 \\
 \omega_4\left[1+\frac{1}{4\rho\omega_4}\left( \theta -2i\beta\eta\omega_4^2\right)+\frac{\omega_4}{i\kappa\rho}\right]_2\\
 \omega_4^3\left[1+\frac{1}{4\rho\omega_4}\left( \theta-6i\beta\eta\omega_4^2\right)\right]_2
\end{matrix}\right).
\end{align*}
\endgroup
Hence,
\begin{equation*}
\Delta_r\left(\rho\right)=\Delta_{r,0}\left(\rho\right)+\mathscr{O}\,(e^{-c\left|\rho\right|}),\quad r=1,2,3,4,
\end{equation*}
where
\begingroup
\allowdisplaybreaks
\begin{align*}
\Delta_{1,0}\left(\rho\right)&=\left(\begin{matrix}
e^{-i\rho}e^{-\frac{i\beta\eta}{2}}\\
-e^{-i\rho}e^{-\frac{i\beta\eta}{2}}\left[1+\frac{\beta\eta}{\rho}\right]_2 \\
\left[i+\frac{1}{4\rho}\left( \theta +2i\beta\eta\right)+\frac{i}{\kappa\rho}\right]_2\\
\left[-i-\frac{1}{4\rho}\left( \theta+6i\beta\eta\right)\right]_2
\end{matrix}\right),\\
\Delta_{2,0}\left(\rho\right)&=\left(\begin{matrix}
e^{\frac{i\beta\eta}{2}} \\
 e^{\frac{i\beta\eta}{2}}\left[1+\frac{i\beta\eta}{\rho}\right]_2 \\
0\\
0 
\end{matrix}\right),\\
\Delta_{3,0}\left(\rho\right)&=\left(\begin{matrix}
 e^{i\rho}e^{-\frac{i\beta\eta}{2}}\\
 -e^{i\rho}e^{-\frac{i\beta\eta}{2}}\left[1-\frac{\beta\eta}{\rho}\right]_2 \\
 \left[-i+\frac{1}{4\rho}\left( \theta +2i\beta\eta\right)+\frac{i}{\kappa\rho}\right]_2\\
 \left[i-\frac{1}{4\rho}\left( \theta+6i\beta\eta\right)\right]_2
\end{matrix}\right),\\
\Delta_{4,0}\left(\rho\right)&=\left(\begin{matrix}
 0\\
 0 \\
 \left[1+\frac{1}{4\rho}\left( \theta -2i\beta\eta\right)+\frac{1}{i\kappa\rho}\right]_2\\
 \left[1+\frac{1}{4\rho}\left( \theta-6i\beta\eta\right)\right]_2
\end{matrix}\right).
\end{align*}
\endgroup
Recall that $e^{\rho\omega_2}=e^{-\rho\omega_4}=e^{-\rho}$ and $e^{\rho\omega_1}=e^{-\rho\omega_3}=e^{i\rho}$ for $\rho\in\mathscr{S}$. We find on reducing the determinant, after some calculations, that the characteristic equation $h\left(\rho\right)=\frac{1}{8}\kappa^{-1}\rho^{-8}e^{-\rho}\Delta\left(\lambda\right)=0$ has, thus, an asymptotic expansion of the form
\begin{equation}\label{eq223dc}
\cos\rho+\frac{\theta\left(\sin\rho+\cos\rho\right)}{4\rho}+\frac{i\left(\beta\eta+\frac{1}{\kappa}\right)\left(\sin\rho-\cos\rho\right)}{2\rho}+\mathscr{O}\,(\rho^{-2})=0
\end{equation}
or
\begin{equation*}
\cos\rho+\mathscr{O}\,(\rho^{-1})=0,
\end{equation*}
valid for $\rho$ in small neighbourhoods of $\left(n-\frac{1}{2}\right)\pi$ for large $n\in\mathbf{N}$. Set $f\left(\rho\right)= \cos\rho$. It follows that $g\left(\rho\right)= f\left(\rho\right)-h\left(\rho\right) =\mathscr{O}\,(\rho^{-1})$. Around $\tau_n\coloneqq\left(n-\frac{1}{2}\right)\pi$, large $n\in\mathbf{N}$, on a contour
\begin{equation*}
\Gamma\coloneqq\left\{\rho\in\mathbf{C}~\middle|~\left|\rho-\tau_n\right|=\frac{\pi}{4}\right\},
\end{equation*}
we consider $f$, so that on $\Gamma$, $\left|f\right|\geq \frac{1}{2}$ and the estimate $\left|g\right|< \left|f\right|$ holds. The functions $f$, $g$, $h$ are all analytic on $\Gamma$ and its interior. Inside $\Gamma$, by Rouche's theorem, there is exactly one zero of $f$ (hence exactly one zero of $h$), given by
\begin{equation}\label{eqwsyxc4}
{\rho}_n=\left(n-\frac{1}{2}\right)\pi+\xi_n.
\end{equation}
We can express the $\xi_n$ in the following way. First we substitute ${\rho}_n$ for $\rho$ in \eqref{eq223dc} and note that
\begin{align*}
\cos{\rho}_n&=\cos\left(n-\frac{1}{2}\right)\pi\cos \xi_n -\sin\left(n-\frac{1}{2}\right)\pi\sin \xi_n = -\left(-1\right)^{n-1}\sin \xi_n ,\\
\sin{\rho}_n&=\sin\left(n-\frac{1}{2}\right)\pi\cos \xi_n +\cos\left(n-\frac{1}{2}\right)\pi\sin \xi_n = \left(-1\right)^{n-1}\cos \xi_n .
\end{align*}
Then we obtain that the $\xi_n$ satisfy
\begin{equation*}
\sin \xi_n =\left[\frac{\theta}{2}+i\left(\beta\eta+ \frac{1}{\kappa}\right)\right]\frac{\cos \xi_n }{2{\rho}_n}
+\mathscr{O}\,({\rho}_n^{-2}).
\end{equation*}
Thus $\sin \xi_n\sim \xi_n$, $\cos \xi_n\sim 1$ and we have
\begin{equation}\label{eqwsyxc3}
\xi_n=\left[\frac{\theta}{2}+i\left(\beta\eta+ \frac{1}{\kappa}\right)\right]\frac{1}{2\left(n-\frac{1}{2}\right)\pi}+\mathscr{O}\,(n^{-2}).
\end{equation}
This together with \eqref{eqwsyxc4} and recalling the definition of $\theta$ (replace $\theta$ by $\frac{\beta^2\eta^2}{2}+{\gamma}-{\eta}^2$ in \eqref{eqwsyxc3}) gives  \eqref{eqasymff567}, thus completing the proof of the theorem.
\end{proof}

Theorem \ref{T-4-1} implies the following conclusion.
\begin{corollary}\label{rem01}
The following statements hold:
\begin{enumerate}[\normalfont(1)]
\item $\lambda=i\rho^2\in\mathbf{C}$ is an eigenvalue of the boundary-eigenvalue problem \eqref{eq_1sffss1} if and only if $\Delta\left(\lambda\right)=0$. Hence,
\begin{equation*}
\sigma\left(T\right)=\left\{\lambda=i\rho^2\in\mathbf{C}~\middle|~\Delta\left(\lambda\right)=0\right\}.
\end{equation*}
\item There is a finite integer $n_0$ such that
\begin{equation*}
\sigma\left(T\right)=\left\{\lambda_{n},\overline{\lambda_n}~\middle|~n\in\mathbf{N},~n\ge n_0\right\}
\end{equation*}
where the pairs $\lambda_n$ satisfy \eqref{eqasymff567}. (By standard reasoning using Rouche's theorem the number $n_0$ can be determined.)
\item The $\lambda_{n}$, $\overline{\lambda_n}$ are simple zeros of $\Delta$ and thus simple eigenvalues of $T$, i.e., $\nu\left(\lambda_n\right)=\operatorname{dim}\operatorname{ker}\left(\lambda_n I-T\right)=1$.
\item Using \eqref{eqasymff567},
\begin{equation*}
\lambda_n=-\left(\beta\eta+\frac{1}{\kappa}\right)+i\left\{\left[\left(n-\frac{1}{2}\right)\pi\right]^2+\frac{ \frac{\beta^2\eta^2}{2}+\gamma-\eta^2 }{2}\right\}+\mathscr{O}\,(n^{-1}).
\end{equation*}
\item $\operatorname{Re}\left(\lambda_n\right)=-\left(\beta\eta+\frac{1}{\kappa}\right)$ is the vertical asymptote of $\sigma\left(T\right)$.
\item For large $n,m\in\mathbf{N}$, the eigenvalues of $T$ have a gap, i.e.,
\begin{equation*}
\inf_{n\neq m}\left|\lambda_n-\lambda_m\right|>0.
\end{equation*}
\end{enumerate}
\end{corollary}

So, to summarise, we have effectively shown so far that the boundary-eigenvalue problem \eqref{eq_1sffss1} has only normal eigenvalues, which form a countably infinite set and accumulate only at infinity, with nonreal eigenvalues properly enumerated as ${\lambda}_{\pm n}$, where $\lambda_{-n}=\overline{\lambda_n}$ and ordered $ \operatorname{Im}\left(\lambda_n\right)\leq\operatorname{Im}\left(\lambda_{n+1}\right)$. The enumeration is such that in the sequence $\left\{{\lambda}_{\pm n}\right\}_{n=1}^\infty$ we have
$\sup\left|\operatorname{Re}\left(\lambda_{\pm n}\right)\right|<\infty$. In fact this combined with the gap condition in the corollary shows that the sequence is interpolating. (It should be emphasised that, in general, the proper enumeration of the eigenvalues is crucial for basisness results, but as a set it does not matter of course.)

\section{Riesz basisness}\label{sec_4}

In investigating the Riesz basisness of the root functions and root vectors corresponding to the eigenvalues of the boundary-eigenvalue problem \eqref{eq_1sffss1}, we now know that all nonreal eigenvalues are simple so that the associated eigenprojections have rank 1 and, as a consequence, that there are only corresponding eigenfunctions and eigenvectors and no associated ones. Indeed, since, by Theorem \ref{T-4-1}, the algebraic multiplicity of each nonreal eigenvalue coincides with the geometric multiplicity and is 1, each system of chains of root vectors consists of exactly one chain of root vectors, with its length being the algebraic multiplicity of the corresponding eigenvalue, namely 1. It is possible that there exist purely real eigenvalues. However, we note that if it is assumed that there are no purely real eigenvalues, then this assumption is without loss of generality for the purposes of the section. According to Corollary \ref{rem01} and the paragraph following it, we can indeed say that if there are purely real eigenvalues, at most finitely many of which are not simple. Then the nonsimplicity of the eigenvalues would not affect the Riesz basisness results. (See also Remark \ref{rmrkfin01}.)

Two principal results are proven in this section. We show that the eigenfunctions of the boundary-eigenvalue problem \eqref{eq_1sffss1} form a Riesz basis for $\bm{L}_2\left(0,1\right)$. The second of these two results is to show that the eigenvectors of $T$ form a Riesz basis for the space $\mathbb{X}$. The main tool in our analysis will be Bari's theorem mentioned in the last paragraph of Section \ref{sec2_3}: The proofs are based on the notion of the quadratic closeness (or ``quadratic nearness'' in Singer's terminology \cite[Definition II.11.2]{MR0298399}) of bases of eigenfunctions or eigenvectors to orthonormal ones, and the results then follow via an application of the theorem.
\begin{theorem}\label{T-4-2}
Let $\left\{\lambda_n,\overline{\lambda_n}\right\}_{n=1}^\infty$ be the zeros of $\Delta$ which satisfy \eqref{eqasymff567}. Then there is a corresponding sequence of eigenfunctions $\{w\left(\rho_n, \,\cdot\,\right),\overline{w\left(\rho_n, \,\cdot\,\right)}{\}}_{n=1}^\infty$ of \eqref{eq_1sffss1}, $\left\|w\left(\rho_n,\,\cdot\,\right)\right\|_0=1$, such that the sequence $\{w\left(\rho_n, \,\cdot\,\right){\}}_{n=1}^\infty$ forms a Riesz basis for $\bm{L}_2\left(0,1\right)$.
\end{theorem}
\begin{proof}
The proof will proceed in three steps.
\begin{enumerate}[label=,leftmargin=*,align=left,labelwidth=\parindent,labelsep=0pt,ref={\arabic*}]
\item\label{item06}\textbf{Step 1.} We begin by considering the boundary-eigenvalue problem
\begin{equation}\label{eq1ssd3cc2}
\left\{\begin{split}
\doublehat{w}^{(4)}-(\gamma-\eta^2)\, \doublehat{w}''+{\lambda}^2\doublehat{w}&=0,\\
\doublehat{w}\left(0\right)=\doublehat{w}''\left(0\right)&=0,\\
\doublehat{w}''\left(1\right)&=0,\\
\doublehat{w}^{(3)}\left(1\right)-(\gamma-\eta^2)\, \doublehat{w}'\left(1\right)&=0,
\end{split}\right.
\end{equation}
where we have $\lambda$-independent boundary conditions. Let $\doublehat{\lambda}_n=i\tilde{\tau}_n^2$ be an eigenvalue of \eqref{eq1ssd3cc2} with corresponding eigenfunction $\doublehat{w}\left(\tilde{\tau}_n , \,\cdot\,\right)$. It should be observed that \eqref{eq1ssd3cc2} is a selfadjoint problem in $\bm{L}_2$ with respect to the spectral parameter $\mu=-{\lambda}^2$ and admits only positive eigenvalues
\begin{equation*}
\mu_n=-\doublehat{\lambda}_n^2= \tilde{\tau}_n^4>0,\quad n\in\mathbf{N},
\end{equation*}
which can be ordered $ \mu_n\leq\mu_{n+1}$ with $\mu_n\rightarrow\infty$ as $n\rightarrow\infty$. Then, using a standard spectral theory result (see \cite[Theorem II.5.1]{Naimark1967}) we have that there exists a sequence $\{\doublehat{w}\left(\mu_n, \,\cdot\,\right){\}}_{n=1}^\infty$ of eigenfunctions of \eqref{eq1ssd3cc2} which forms an orthonormal basis for $\bm{L}_2\left(0,1\right)$. Using simple modifications of the proof of Theorem \ref{T-4-1}, for large $n\in\mathbf{N}$,
\begin{equation*}
\tilde{\tau}_n=\left(n-\frac{1}{2}\right)\pi
+\frac{\gamma-\eta^2}{4\left(n-\frac{1}{2}\right)\pi}+\mathscr{O}\,(n^{-2}).
\end{equation*}
Equivalently
\begin{equation*}
\tilde{\tau}_n={\tau}_n
+\frac{\gamma-\eta^2}{4\tau_n}+\mathscr{O}\,({\tau}_n^{-2}),\quad \tau_n\coloneqq \left(n-\frac{1}{2}\right)\pi.
\end{equation*}
A corresponding eigenfunction is given by
\begin{equation*}
w\left(\tilde{\tau}_n,s\right)=\left(\alpha^+_n\right)^2\sinh\alpha^+_n\sin\alpha^-_n s+\left(\alpha^-_n\right)^2\sin\alpha^-_n\sinh\alpha^+_n  s
\end{equation*}
where
\begin{equation*}
\alpha^\pm_n=\sqrt{\frac{\sqrt{4\tilde{\tau}_n+\gamma-\eta^2}\pm(\gamma-\eta^2)}{2}},
\end{equation*}
and we may set $\doublehat{w}\left(\mu_n,s\right)=\frac{w\left(\tilde{\tau}_n,s\right)}{\left\|w\left(\tilde{\tau}_n,\,\cdot\,\right)\right\|_0}$.
\item\label{item06s}\textbf{Step 2.} Consider now the boundary-eigenvalue problem
\begin{equation}\label{eq1ssd32}
\left\{\begin{split}
\hat{w}^{(4)}-(\gamma-\eta^2)\, \hat{w}''+{\lambda}^2\hat{w}&=0,\\
\hat{w}\left(0\right)=\hat{w}''\left(0\right)&=0,\\
\hat{w}''\left(1\right)+{\lambda}\kappa \hat{w}'\left(1\right)&=0,\\
\hat{w}^{(3)}\left(1\right)-(\gamma-\eta^2)\, \hat{w}'\left(1\right)&=0.
\end{split}\right.
\end{equation}
Let $\hat{\lambda}_n=i\tilde{\rho}_n^2$ be an eigenvalue of \eqref{eq1ssd32} with eigenfunction $\hat{w}\left(\tilde{\rho}_n , \,\cdot\,\right)$. It can be shown via a direct calculation that
\begin{equation*}
\tilde{\rho}_n=\tilde{\tau}_n
+\frac{i }{2\kappa\tilde{\tau}_n}+\mathscr{O}\,(\tilde{\tau}_n^{-2}),
\end{equation*}
and that
\begin{equation*}
\hat{w}\left(\tilde{\rho}_n ,s\right)=\doublehat{w}\left(\tilde{\tau}_n,s\right)\sum^{\infty}_{k=0}\frac{U_{k}\left(s\right)}{\tilde{\tau}_n^k}
\end{equation*}
where we recall that $\doublehat{w}\left(\tilde{\tau}_n, \,\cdot\,\right)$ is an eigenfunction of \eqref{eq1ssd3cc2} corresponding to $\tilde{\tau}_n$, $\left\|\doublehat{w}\left(\tilde{\tau}_n, \,\cdot\,\right)\right\|_0=1$, and the $U_{k}\left(s\right)$ are uniformly bounded with $U_0\equiv1$; hence there is a constant $M_0>0$ such that
\begin{equation*}
\left\|\doublehat{w}\left(\tilde{\tau}_n,\,\cdot\,\right) -\hat{w}\left(\tilde{\rho}_n ,\,\cdot\,\right)\right\|^2_0\leq\frac{M_0}{\tilde{\tau}_n^2}.
\end{equation*}
\item\label{item06sss}\textbf{Step 3.} Let ${w}\left({\rho}_n , \,\cdot\,\right)$ be an eigenfunction of the boundary-eigenvalue problem \eqref{eq_1sffss1} corresponding to an eigenvalue ${\lambda}_n=i{\rho}_n^2$. Then, in view of \eqref{eqasymff567}, it can be shown again via a direct calculation that 
\begin{equation*}
{\rho}_n=\tilde{\rho}_n
+\frac{ \frac{\beta^2\eta^2}{2}+2i\beta\eta
 }{4\tilde{\rho}_n}+\mathscr{O}\,(\tilde{\rho}_n^{-2}).
\end{equation*}
Furthermore,
\begin{equation*}
{w}\left({\rho}_n ,s\right)=\hat{w}\left(\tilde{\rho}_n ,s\right)\sum^{\infty}_{k=0}\frac{F_{k}\left(s\right)}{\tilde{\rho}_n^k}
\end{equation*}
where the $\hat{w}\left(\tilde{\rho}_n ,\,\cdot\,\right)$ are the eigenfunctions of \eqref{eq1ssd32} and the $F_{k}\left(s\right)$ are uniformly bounded with $F_0\equiv1$. There exists a constant $M_1>0$ such that 
\begin{equation*}
\left\|\hat{w}\left(\tilde{\rho}_n ,\,\cdot\,\right)-{w}\left({\rho}_n ,\,\cdot\,\right)\right\|^2_0\leq\frac{M_1}{\left|\tilde{\rho}_n\right|^2}.
\end{equation*}
Note that, for large enough $n$,
\begin{equation*}
\frac{\rho_n}{\tau_n}=\frac{\tilde{\rho}_n}{\tau_n}= \frac{\tilde{\tau}_n}{\tau_n}\sim1
\end{equation*}
and thus, again, we have that there exists a constant $M>0$ such that
\begin{equation*}
\left\|\doublehat{w}\left(\tilde{\tau}_n ,\,\cdot\,\right)-{w}\left({\rho}_n ,\,\cdot\,\right)\right\|^2_0\leq\frac{M}{\tilde{\tau}_n^2}.
\end{equation*}
Hence the sequence of eigenfunctions $\{w\left(\rho_n,\,\cdot\, \right){\}}_{n=1}^\infty$ is quadratically close to the orthonormal sequence $\{\doublehat{w}\left(\tilde{\tau}_n,\,\cdot\,\right){\}}_{n=1}^\infty$ in $\bm{L}_2\left(0,1\right)$. The desired result now follows from Bari's theorem.
\end{enumerate}
\end{proof}

\begin{remark}\label{R-4-1}
We point out that the eigenvalues and eigenfunctions of the boundary eigenvalue problem \eqref{eq_1sffss1} need not be in one-to-one correspondence. Indeed it is possible that a same eigenfunction corresponds to different eigenvalues of \eqref{eq_1sffss1}. For example, we know that \eqref{eq1ssd3cc2} is a selfadjoint problem in $\bm{L}_2$ with respect to the spectral parameter $\mu=-{\lambda}^2$ which admits only positive eigenvalues; namely, the differential operator
\begin{equation*}
Lw\coloneqq w^{(4)}-(\gamma-\eta^2)\,w''
\end{equation*}
is a positive-definite selfadjoint operator in $\bm{L}_2\left(0,1\right)$ defined on the $\lambda$-independent domain
\begin{equation*}
\bm{D}\left(L\right)=\left\{w\in\bm{L}_2\left(0,1\right)~\middle|
~\begin{gathered}
w\in{\bm{H}}^4\left(0,1\right), \\
w\left(0\right)=w''\left(0\right)=0,~ w''\left(1\right)=w^{(3)}\left(1\right)-(\gamma-\eta^2)\, w'\left(1\right)=0
\end{gathered}\right\}
\end{equation*}
which has eigenvalues $\mu_n$ and corresponding eigenfunctions ${w}\left(\mu_n , \,\cdot\,\right)$. To the eigenvalues $\lambda_n=i\sqrt{\mu_n}$, $\lambda_{-n}=\overline{\lambda_n}=-i\sqrt{\mu_n}$ of \eqref{eq1ssd3cc2} there corresponds the same eigenfunction in the sense that there is a function ${w}\left(\mu_n , \,\cdot\,\right)\in\bm{L}_2\left(0,1\right)$ such that $(\lambda_n I-L)\,{w}\left(\mu_n , \,\cdot\,\right)=0$ and $(\lambda_{-n} I-L)\,{w}\left(\mu_n , \,\cdot\,\right)=0$. This is because the spectral parameter transformation $\mu\mapsto -\lambda^2$ is not an injective mapping from the $\lambda$-plane to the $\mu$-plane. To determine the one-to-one nature of an eigenpair $\left\{\lambda,w\left(\lambda,\,\cdot\,\right)\right\}$, we must restrict $\lambda$ to lie in a subregion or sector of the plane. In the general case of differential operators associated with boundary-eigenvalue problems on a finite interval the partitioning method in \cite[Section II.4]{Naimark1967} can be used which consists of two steps. First, one adjusts the order of the spectral parameter in order to accommodate the order of the differential operator, say, with order $k$. This is analogous to the spectral parameter transformation $\lambda\mapsto i\rho^2$ in our case. Second, the $\rho$-plane is divided into $2k$ sectors
\begin{equation*}
\mathscr{S}_j\coloneqq\left\{\rho\in\mathbf{C}:\frac{j\pi}{k}\leq \arg\left(\rho\right) \leq\frac{\left(j+1\right)\pi}{k}\right\},\quad j=0,1,\ldots,2k-1,
\end{equation*}
such that injectivity with respect to $\rho^k$ obtains on $\mathscr{S}_{j}\cup \mathscr{S}_{j+1}$. On such a sector eigenpairs are one-to-one. For \eqref{eq_1sffss1}, the eigenpairs $\left\{\rho_n,w\left(\rho_n,\,\cdot\,\right)\right\}$ are one-to-one on $0<\arg\left(\rho\right)\le\frac{\pi}{2}$, or equivalently on $0<\arg\left(\lambda\right)\le\pi$. So the case we consider in the theorem, namely the Riesz basisness of the eigenfunctions $w\left(\rho_n,\,\cdot\,\right)$ in the space $\bm{L}_2\left(0,1\right)$, is shared by the eigenfunctions $\overline{w\left(\rho_n, \,\cdot\,\right)}$ corresponding to another sector.
\end{remark}

\begin{theorem}\label{T-4-4}
Let $\left\{\lambda_n,\overline{\lambda_n}\right\}_{n=1}^\infty$ be the zeros of $\Delta$ which satisfy \eqref{eqasymff567}, enumerated in such a way that $\lambda_{-n}=\overline{\lambda_n}$. Let $\{w\left(\rho_n, \,\cdot\,\right),\overline{w\left(\rho_n, \,\cdot\,\right)}{\}}_{n=1}^\infty$ be a sequence of eigenfunctions of \eqref{eq_1sffss1}, $\left\|w\left(\rho_n,\,\cdot\,\right)\right\|_0=1$, corresponding to $\{\lambda_{n},\lambda_{-n}{\}}_{n=1}^\infty$. Then the $\mathbf{CLS}$ operator $T$ has a sequence $\left\{\lambda_{\pm n}\right\}_{n=1}^\infty$ of eigenvalues to which there corresponds a sequence of eigenvectors $\left\{x_{\pm n}\right\}_{n=1}^\infty$ of $T$,
\begin{equation*}
x_{n}=\left(\begin{matrix}
\frac{w\left(\rho_n, \,\cdot\,\right)}{\lambda_n}\\
w\left(\rho_n, \,\cdot\,\right)
\end{matrix}\right),\quad x_{-n}=\left(\begin{matrix}
\frac{\overline{w\left(\rho_n, \,\cdot\,\right)}}{\overline{\lambda_n}}\\
\overline{w\left(\rho_n, \,\cdot\,\right)}
\end{matrix}\right),
\end{equation*}
having the following properties:
\begin{enumerate}[\normalfont(1)]
\item\label{T-3-2-bss} $(\lambda_n I-T)\,x_n=0$, $(\lambda_{-n} I-T)\,x_{-n}=0$, and
\begin{equation}\label{dddfeq123}
\left\|x_{\pm n}\right\|\asymp\sqrt{2},\quad n\in\mathbf{N}.
\end{equation}
\item\label{T-3-2-css} $\left\{x_{\pm n}\right\}_{n=1}^\infty$ forms a Riesz basis for $\mathbb{X}$.
\end{enumerate}
\end{theorem}
\begin{proof}
For the proof of property \ref{T-3-2-bss} note that if $T$ has an eigenvalue $\lambda_n$ with corresponding eigenvector $x_n$, we have
\begin{equation*}
Tx_n=\left(\begin{matrix}
w\left(\rho_n, \,\cdot\,\right)\\
-\frac{w^{(4)}\left(\rho_n, \,\cdot\,\right)}{\lambda_n}+(\gamma-\eta^2)\, \frac{w''\left(\rho_n, \,\cdot\,\right)}{\lambda_n}-2\beta\eta w'\left(\rho_n, \,\cdot\,\right)
\end{matrix}\right)=\left(\begin{matrix}
w\left(\rho_n, \,\cdot\,\right)\\
\lambda_n w\left(\rho_n, \,\cdot\,\right)
\end{matrix}\right)=\lambda_nx_{n}
\end{equation*}
and, since $\left(\lambda_n I-T\right)x_n=0$,
\begin{equation*}
(\lambda_{-n} I-T)\,x_{-n}=(\overline{\lambda_n} I-T)\,\overline{x_{n}}=\overline{({\lambda_n} I-T)\,x_{n}}=0.
\end{equation*}
To verify the normalisation condition \eqref{dddfeq123} and the remaining property \ref{T-3-2-css} of the theorem, we first compute in the sequence $\left\{x_{\pm n}\right\}_{n=1}^\infty$
\begingroup
\allowdisplaybreaks
\begin{align*}
\left\|x_n\right\|^2&=\frac{1}{\left|\lambda_n\right|^2}\int^1_0 \left|w''\left(\rho_n,s\right)\right|^2ds+\frac{\gamma-\eta^2}{\left|\lambda_n\right|^2}\int^1_0\left|w'\left(\rho_n,s\right)\right|^2ds+\int^1_0\left|w\left(\rho_n,s\right)\right|^2ds\\
&=-\frac{\overline{\lambda_n}}{\left|\lambda_n\right|^2}\kappa\left|w'\left(\rho_n,1\right)\right|^2-\frac{2\beta\eta\overline{\lambda_n}}{\left|\lambda_n\right|^2}\int^1_0w\left(\rho_n,s\right)\overline{w'\left(\rho_n,s\right)}\,ds\\
&\quad+\frac{\left|\lambda_n\right|^2-\overline{\lambda_n^2}}{\left|\lambda_n\right|^2}\int^1_0\left|w\left(\rho_n,s\right)\right|^2ds,
\end{align*}
\endgroup
the second line following immediately when we integrate by parts and then use the boundary conditions and differential equation in \eqref{eq_1sffss1}. Also,
\begin{equation*}
\left\|x_n\right\|^2=-\frac{\lambda_n}{\left|\lambda_n\right|^2}\kappa\left|w'\left(\rho_n,1\right)\right|^2-\frac{2\beta\eta\lambda_n}{\left|\lambda_n\right|^2}\int^1_0w'\left(\rho_n,s\right)\overline{w\left(\rho_n,s\right)}\,ds+\frac{\left|\lambda_n\right|^2-\lambda_n^2}{\left|\lambda_n\right|^2}\int^1_0\left|w\left(\rho_n,s\right)\right|^2ds.
\end{equation*}
Thus we have
\begin{equation}
\begin{split}
\left\|x_n\right\|^2&=-\frac{\operatorname{Re}\,(\overline{{\lambda}_n})}{\left|\lambda_n\right|^2}\left(\beta\eta \left|w\left(\rho_n,1\right)\right|^2+\kappa \left|w'\left(\rho_n,1\right)\right|^2\right)\\
&\quad+\frac{2\beta\eta\operatorname{Im}\,(\overline{\lambda_n})}{\left|\lambda_n\right|^2}\int^1_0\operatorname{Im}\,(w\left(\rho_n,s\right)\overline{w'\left(\rho_n,s\right)})\,ds+\frac{2\left(\operatorname{Im}\,(\lambda_n)\right)^2}{\left|\lambda_n\right|^2}\int^1_0 \left|w\left(\rho_n,s\right)\right|^2ds.\label{eqwwr45}
\end{split}
\end{equation}
Then after some standard estimates of $w\left(\rho_n,\,\cdot\,\right)$, $w'\left(\rho_n,\,\cdot\,\right)$ applied to the boundary terms and the first integral in \eqref{eqwwr45} we obtain that when $\left\|{w}\left(\rho_n, \,\cdot\,\right)\right\|_0=1$, there exists a constant $M_0>0$ such that
\begin{equation*}
\frac{\left|\operatorname{Re}\,(\overline{{\lambda}_n})\right|}{\left|\lambda_n\right|^2}\left(\beta\eta \left|w\left(\rho_n,1\right)\right|^2+\kappa \left|w'\left(\rho_n,1\right)\right|^2\right)+\frac{2\beta\eta\left|\operatorname{Im}\,(\overline{\lambda_n})\right|}{\left|\lambda_n\right|^2}\int^1_0\left|w\left(\rho_n,s\right)\overline{w'\left(\rho_n,s\right)}\right|ds\le\frac{M_0}{\left|\rho_n\right|^2}
\end{equation*}
and \eqref{eqwwr45} yields $\frac{\left(\operatorname{Im}\,(\lambda_n)\right)^2}{\left|\lambda_n\right|^2}\asymp 1$, and hence
\begin{equation*}
\left\|x_n\right\|^2\asymp2\int^1_0 \left|w\left(\rho_n,s\right)\right|^2ds=2.
\end{equation*}
Set now
\begin{equation*}
\doublehat{x}_{n}=\left(\begin{matrix}
\frac{\doublehat{w}\left(\tilde{\tau}_n, \,\cdot\,\right)}{i\tilde{\tau}^2_n}\\
\doublehat{w}\left(\tilde{\tau}_n, \,\cdot\,\right)
\end{matrix}\right),\quad\doublehat{x}_{-n}=\left(\begin{matrix}
-\frac{{\doublehat{w}\left(\tilde{\tau}_n, \,\cdot\,\right)}}{i{\tilde{\tau}^2_n}}\\
{\doublehat{w}\left(\tilde{\tau}_n, \,\cdot\,\right)}
\end{matrix}\right),\quad n\in\mathbf{N},
\end{equation*}
where we recall from the proof of Theorem \ref{T-4-2} that $\{\tilde{\tau}_n,\doublehat{w}\left(\tilde{\tau}_n, \,\cdot\,\right)\}$ are the (real) eigenpairs of the selfadjoint boundary-eigenvalue problem \eqref{eq1ssd3cc2}. We compute, for any $\doublehat{x}_{n},\doublehat{x}_{m}\in\mathbb{X}$,
\begingroup
\allowdisplaybreaks
\begin{align*}
\left<\doublehat{x}_{n},\doublehat{x}_{m}\right>&=\frac{1}{\tilde{\tau}_n^2\tilde{\tau}_m^2}\int^1_0 \doublehat{w}''\left(\tilde{\tau}_n, s\right){\doublehat{w}''\left(\tilde{\tau}_m, s\right)}\,ds+\frac{\gamma-\eta^2}{\tilde{\tau}_n^2\tilde{\tau}_m^2}\int^1_0\doublehat{w}'\left(\tilde{\tau}_n, s\right){\doublehat{w}'\left(\tilde{\tau}_m, s\right)}\,ds\\
&\qquad+\int^1_0\doublehat{w}\left(\tilde{\tau}_n, s\right){\doublehat{w}\left(\tilde{\tau}_m, s\right)}\,ds\\
&=\frac{\tilde{\tau}_m^4}{\tilde{\tau}_n^2\tilde{\tau}_m^2}\int_0^1\doublehat{w}\left(\tilde{\tau}_n, s\right){\doublehat{w}\left(\tilde{\tau}_m, s\right)}\,ds+\int_0^1\doublehat{w}\left(\tilde{\tau}_n, s\right){\doublehat{w}\left(\tilde{\tau}_m, s\right)}\,ds
\end{align*}
\endgroup
after the usual integrations by parts and using the boundary conditions and differential equation in \eqref{eq1ssd3cc2}. Thus for $m=n$, when $\|\doublehat{w}\left(\tilde{\tau}_n, \,\cdot\,\right)\!{\|}_0=1$, we get $\left\|\doublehat{x}_n\right\|^2=2$. So there exists a sequence $\{\doublehat{x}_{\pm n}{\}}_{n=1}^\infty$ which forms an orthonormal basis for $\mathbb{X}$, and there exists a constant $M>0$ such that
\begingroup
\allowdisplaybreaks
\begin{align*}
\left\|\doublehat{x}_n-x_n\right\|^2&=\int_0^1\left|\frac{\doublehat{w}''\left(\tilde{\tau}_n,s\right)}{i\tilde{\tau}_n^2}-\frac{w''\left(\rho_n,s\right)}{\lambda_n}\right|^2ds+(\gamma-\eta^2)\int_0^1\left|\frac{\doublehat{w}'\left(\tilde{\tau}_n,s\right)}{i\tilde{\tau}_n^2}-\frac{w'\left(\rho_n,s\right)}{\lambda_n}\right|^2ds\\
&\qquad+\int_0^1\left|\doublehat{w}\left(\tilde{\tau}_n,s\right)-w\left(\rho_n,s\right)\right|^2ds\\
&\leq\frac{M}{\tilde{\tau}_n^2}\int_0^1\left|\doublehat{w}\left(\tilde{\tau}_n, s\right)\right|^2ds=\frac{M}{\tilde{\tau}_n^2}.
\end{align*}
\endgroup
We then obtain that the two sequences $\{\doublehat{x}_{\pm n}{\}}_{n=1}^\infty$ and $\{{x}_{\pm n}{\}}_{n=1}^\infty$ are quadratically close. With this, again applying Bari's theorem, the proof of property \ref{T-3-2-css}, and thus of the theorem is complete.
\end{proof}

\begin{remark}
It should be noted that the Riesz basisness result obtained in Theorem \ref{T-4-4} can be strengthened to yield Bari basisness of the eigenvectors for $\mathbb{X}$. This is because the boundary-eigenvalue problem \eqref{eq_1sffss1} under the spectral transformation $\lambda\mapsto i\rho^2$ is Birkhoff regular, and by Theorem \ref{L-2-1}, $T^{-1}$ is compact, and so $T$ has a purely discrete spectrum, which guarantees the minimality of the eigenvectors in $\mathbb{X}$. In fact the completeness of the eigenvectors can be verified by application of Keldysh's results \cite[Sections V.8--10]{GohbergKrein1969}. (See \cite[Section 6]{MR3588930} for explanation.)
\end{remark}

\section{Series expansions and exponential stability}\label{sec_4b}

We begin by characterising the series expansion of the solution \eqref{eq_03} to the $\mathbf{CLS}$ represented by the initial-value problem \eqref{eq_03xab}. On the basis of Theorems \ref{L-2-1} and \ref{T-4-4} and Corollary \ref{rem01}, the following theorem is valid.
\begin{theorem}\label{T-4-s}
Given $x_0\in \bm{D}\left(A\right)$, the solution \eqref{eq_03} to \eqref{eq_03xab} can be represented in series form as
\begin{equation*}
\mathcal{U}\left(t\right)x_0=\sum^{\infty}_{n=1}e^{\lambda_n t}\,\bigl<x_0, z_n\bigr>\,x_n+\sum^{\infty}_{n=1}e^{\lambda_{-n} t}\,\bigl<x_0, z_{-n}\bigr>\,x_{-n},\quad t\ge 0,
\end{equation*}
where $\bigl\{x_{\pm n},z_{\pm n}\bigr\}_{n=1}^\infty$ is a biorthogonal sequence in $\mathbb{X}$ and $\mathcal{U}\left(t\right)$ is the contraction $C_0$-semigroup generated by the $\mathbf{CLS}$ operator $T$, which extends to a $C_0$-group on $\mathbb{X}$.
\end{theorem}
\begin{remark}\label{rmrkfin01}
By the results of Theorem \ref{T-4-1} and Corollary \ref{rem01} it is clear that there can be  at most a finite number of nonsimple eigenvalues of $T$. It is not easy to analyse the multiplicities such eigenvalues directly, despite the positive results on the simplicity of nonreal eigenvalues. For completeness we note that Theorem \ref{T-4-s} could be extended to cover the case in which generally there exists a finite number of nonsimple eigenvalues of $T$. For some integer $m$ let $\left\{\mu_j,\overline{\mu_j}\right\}_{j=1}^m$ be the zeros of $\Delta$ corresponding to these eigenvalues for which $\nu\left(\mu_j\right)>1$. Let $\left\{E\left(\mu_j,T\right),E\left(\overline{\mu_j},T\right)\right\}_{j=1}^m$ be the corresponding eigenprojections (Riesz projections) of $T$. Then, for $t\ge 0$,
\begingroup
\allowdisplaybreaks
\begin{align*}
&\mathcal{U}\left(t\right)x_0\\
&\quad=\sum_{j=1}^me^{\mu_j t}\sum_{k=0}^{\nu\left(\mu_j\right)}\frac{t^k}{k!}\left(T-\mu_j I\right)^kE\left(\mu_j,T\right)x_0+\sum_{j=1}^me^{{\mu_{-j}} t}\sum_{k=0}^{\nu\left({\mu_{-j}}\right)}\frac{t^k}{k!}\left(T-{\mu_{-j}} I\right)^kE\left({\mu_{-j}},T\right)x_0\\
&\qquad+\sum^{\infty}_{n=1}e^{\lambda_n t}\,\bigl<x_0, z_n\bigr>\,x_n+\sum^{\infty}_{n=1}e^{\lambda_{-n} t}\,\bigl<x_0, z_{-n}\bigr>\,x_{-n},
\end{align*}
\endgroup
the series expansion being true in $\mathbb{X}$ in the sense of norm-convergence.
\end{remark}
Combining Theorem \ref{T-4-s} with the results of Section \ref{sec_3}, we obtain as the final result exponential stability of the $\mathbf{CLS}$.
\begin{theorem}
The $\mathbf{CLS}$ is exponentially stable for $\eta,\kappa>0$ and $\gamma> \eta^2$.
\end{theorem}

\section{Conclusions}\label{sec_5}

This paper has proven the exponential stability of the $\mathbf{CLS}$ consisting of the one-dimensional model of a stretched tube conveying fluid, with one end simply supported, via boundary control at the other end. Specifically we have shown that when the tension in the tube is greater than the square of the fluid-flow velocity, in the sense that $\gamma> \eta^2$, the addition, through the feedback relation \eqref{eq_06ssyy}, of small boundary damping represented by $\kappa>0$ does not destroy (and actually improves) the exponential stability of the $\mathbf{CLS}$, even when flow is admitted -- i.e., when $\eta>0$. For the purpose of identifying larger parameter regions in which all vibrations die out exponentially fast and the tube neither flutters nor buckles, this is a physically interesting conclusion.

In the proof process, the so-called spectral approach has been followed involving a thorough analysis of the spectral properties and of the Riesz basisness for the eigenvectors of the spectral problems and operators involved. We have established, somewhat independently, the Riesz basisness of the eigenfunctions of the boundary-eigenvalue problem \eqref{eq_1sffss1} in the space $\bm{L}_2\left(0,1\right)$. We have also shown that the eigenvectors of the $\mathbf{CLS}$ operator $T$ constitute a Riesz basis for the state space $\mathbb{X}$. Under these circumstances the spectrum of the $\mathbf{CLS}$ operator determines the energy decay rate $\varepsilon$ in \eqref{eq_05}, as has all been explained in the paper; hence, since we have proven $\varepsilon>0$, the exponential decay of the energy of solutions of the $\mathbf{CLS}$, i.e.\ exponential stability of the $\mathbf{CLS}$.

The boundary-eigenvalue problem \eqref{eq_1sffss1} considered in this paper belongs to the general class of boundary-eigenvalue problems with $\lambda$-dependent boundary conditions. Associated with \eqref{eq_1sffss1} and its linearisation \eqref{eq_1sffss1ss} have been the operator pencils
\begin{equation*}
\mathcal{M}\left(\lambda\right)=\lambda^2G+\lambda D+C\quad\text{and}\quad\mathcal{P}\left(\lambda\right)=\lambda I-T
\end{equation*}
in the Hilbert product spaces $\mathbb{Y}$ and $\mathbb{X}$, respectively. So there arises the more general question as to the appropriate choice of underlying spaces $\mathbb{Y}$ and $\mathbb{X}$ and on conditions on the operators involved, under which the Riesz basisness of the root vectors of one pencil does follow automatically from that of the other. In this paper we have not, for example, addressed the question whether the eigenvectors of $\mathcal{M}$ are a Riesz basis for $\mathbb{Y}$. This problem and related ones will be addressed in a future paper.

\bigskip\noindent
\textbf{Acknowledgments.} This research was supported in part by the National Natural Science Foundation of China under Grant NSFC-61773277 and in part by the Stiftung KESSLER+CO für Bildung und Kultur und Grant EXPLOR-24MM. The authors would like to thank Dr.\ David Swailes of the School of Mathematics, Statistics and Physics, Newcastle University for useful discussions and remarks. They would also like to thank one of the referees for pointing out that the result of Theorem \ref{T-4-1} for the case where $\eta=0$ is essentially contained in \cite[Theorem 5.2]{MR3725254}.

\bibliographystyle{plain}
\bibliography{BibLio02}

\end{document}